\numberwithin{equation}{section}
\def\eps{\varepsilon }
\newcommand\R{\mathbb R}
\newcommand\C{\mathbb C}
\def\eps{\varepsilon}
\newcommand\kernel{\hbox{\rm Ker}}
\newcommand\br{\begin{remark}}
\newcommand\er{\end{remark}}
\newcommand\bp{\begin{pmatrix}}
\newcommand\ep{\end{pmatrix}}
\newcommand{\be}{\begin{equation}}
\newcommand{\ee}{\end{equation}}
\newcommand\ba{\begin{equation}\begin{aligned}}
\newcommand\ea{\end{aligned}\end{equation}}
\newcommand{\bap}{\begin{app}}
\newcommand{\eap}{\end{app}}
\newcommand{\begs}{\begin{exams}}
\newcommand{\eegs}{\end{exams}}
\newcommand{\beg}{\begin{example}}
\newcommand{\eeg}{\end{exaplem}}
\newcommand{\bpr}{\begin{proposition}}
\newcommand{\epr}{\end{proposition}}
\newcommand{\bt}{\begin{theorem}}
\newcommand{\et}{\end{theorem}}
\newcommand{\bc}{\begin{corollary}}
\newcommand{\ec}{\end{corollary}}
\newcommand{\bl}{\begin{lemma}}
\newcommand{\el}{\end{lemma}}
\newcommand{\bd}{\begin{definition}}
\newcommand{\ed}{\end{definition}}
\newcommand{\brs}{\begin{remarks}}
\newcommand{\ers}{\end{remarks}}
\newcommand{\RR}{{\mathbb R}}
\newcommand{\ZZ}{{\mathbb Z}}
\newcommand{\CC}{{\mathbb C}}
\newcommand{\pa}{{\partial}}
\newcommand{\const}{\text{\rm constant}}
\newcommand{\Id}{{\rm Id }}
\newcommand{\Span}{{\rm Span }}
\newcommand{\sgn}{\text{\rm sgn}}
\newtheorem{theorem}{Theorem}[section]
\newtheorem{proposition}[theorem]{Proposition}
\newtheorem{corollary}[theorem]{Corollary}
\newtheorem{lemma}[theorem]{Lemma}
\theoremstyle{remark}
\newtheorem{remark}[theorem]{Remark}
\theoremstyle{definition}
\newtheorem{definition}[theorem]{Definition}
\newtheorem{example}[theorem]{Example}
\newcommand{\RM}{\mathbb{R}}
\newcommand{\HM}{\,\mbox{\bf H}}
\newcommand{\dom}{\text{\rm{dom}}}
\newcommand{\beq}{\begin{equation}}
\newcommand{\eeq}{\end{equation}}
\title{
Quasi-gradient systems, modulational dichotomies,\\
and stability of spatially periodic patterns
}
\author{ Alin Pogan}
\address{ Indiana University, Bloomington, IN 47405}
\email{apogan@indiana.edu}
\thanks{Research of A.P. was partially supported under
NSF grant no.  DMS-0806614}
\author{Arnd Scheel}
\address{University of Minnesota,
206 Church St. SE, Minneapolis, MN 55455}
\email{scheel@ima.umn.edu}
\thanks{Research of A.S. was partially supported under
NSF grant no.  DMS-0806614}
\author{Kevin Zumbrun}
\address{Indiana University, Bloomington, IN 47405}
\email{kzumbrun@indiana.edu}
\thanks{Research of K.Z. was partially supported
under NSF grant no. DMS-0300487}
\begin{document}

\begin{abstract}
Extending the approach of
Grillakis--Shatah--Strauss, Bronski--Johnson--Kapitula, and others
for Hamiltonian systems, we explore relations between the
constrained variational problem $\min_{X:C(X)=c_0} \mathcal{E}(X)$,
$c_0\in \RM^r$, and stability of solutions of
a class of degenerate ``quasi-gradient'' systems
$dX/dt=-M(X)\nabla \mathcal{E}(X)$
admitting constraints,
including Cahn-Hilliard equations, one- and multi-dimensional viscoelasticity,
and coupled conservation law-reaction diffusion systems
arising in chemotaxis and related settings.
Using the relation between variational stability and the signature of
$\partial c/\partial \omega \in \R^{r\times r}$,
where $c(\omega)=C(X^*_\omega)\in \RM^r$ denote
the values of the imposed constraints
and $\omega\in \RM^r$ the associated Lagrange multipliers at
a critical point $X^*_\omega$,
we obtain as in the Hamiltonian case a
general criterion for co-periodic stability of periodic waves,
illuminating and extending a number of previous results obtained by direct
Evans function techniques.
More interestingly, comparing the form of the Jacobian
arising in the co-periodic theory to Jacobians arising in the
formal Whitham equations associated with modulation,
we recover and substantially generalize
a previously mysterious ``modulational dichotomy'' observed in special cases
by Oh--Zumbrun and Howard, showing that co-periodic and sideband
stability are incompatible. In particular, we both illuminate and
extend to general viscosity/strain-gradient effects and
multidimensional deformations the result of Oh-Zumbrun of universal
modulational instability of periodic solutions of the equations of
viscoelasticity with strain-gradient effects, considered as
functions on the whole line. Likewise, we generalize to
multi-dimensions corresponding results of Howard on periodic
solutions of Cahn--Hilliard equations.
\end{abstract}
\date{\today}
\maketitle

\section{Introduction }\label{s:intronew}

In this paper, we investigate from a general point of view stability
of standing spatially periodic solutions of a broad class of equations with
gradient or ``quasi-gradient'' structure including Cahn--Hilliard,
viscoelasticity, and coupled conservative--reaction diffusion equations.
Our main goals are to (i) understand modulational stability of periodic
viscoelastic solutions with physically relevant
viscosity/strain-gradient coefficients and multi-dimensional deformations,
about which up to now very little has been known, and
(ii) understand through gradient or related structure the
commonality of results that have emerged through direct Evans-function
techniques in this and other types of systems.

In particular, we seek to better understand an apparently special
modulational dichotomy that was established for a special model of
one-dimensional viscoelasticity by
Oh--Zumbrun in \cite{OZ1}, but later shown by Howard \cite{Ho1}
to hold also for the Cahn--Hilliard equation, asserting that,
under rather general conditions, {\it co-periodic
and sideband stability are mutually incompatible:}
in particular, periodic solutions are always modulationally unstable.
Both of these results were obtained by direct Evans-function computations,
leaving the underlying mechanism unclear.
Moreover, in the viscoelastic case, the technical condition of monotonicity
of period with respect to amplitude left open the possibility that there might
be special parameter regimes in which periodic waves could be stable.
Further, the method of proof does not appear to generalize to multi-dimensional
planar periodic waves, as studied for example in \cite{Y,BYZ}.

Thus, these prior results left intriguing questions regarding the possibility
of stable periodic viscoelastic solutions.
We note that, even though second-order reaction-diffusion equations
cannot, by Sturm--Liouville considerations, support stable periodic waves,
nor, for the same reasons, can conservative scalar Cahn--Hilliard equations,
the Swift-Hohenberg equations give a well-known example of
a higher-order reaction-diffusion equation that does
support stable waves \cite{Mi,Sc},
and generalized Kuramoto-Sivashinsky equations
are another well-known example showing that conservation laws
can likewise support stable periodic waves; see \cite{JNRZ1,BJNRZ,FST}.
Thus, the conservative structure of the equations of viscoelasticity
is not a priori an obstruction to stability.

A clue noted in \cite{OZ1,Z3} regarding possible common origins of
the modulational dichotomies for Cahn--Hilliard and viscoelasticity equations
are that both serve as models for phase-transitional elasticity, and
both share a nonincreasing Cahn--Hilliard/van der Waals energy
functional $\mathcal{E}$.
Indeed, the Cahn--Hilliard equation is well-known to be the gradient
system with respect to $H^{-1}$ of the functional $\mathcal{E}$,
that is,
\be\label{struct}
(d/dt)X(t)=-M\nabla \mathcal{E}(X(t)),
\ee
where $M= (i\partial_x)(i\partial_x)^*=-\partial_x^2$ is symmetric
positive semidefinite.
It has been conjectured \cite{OZ1,Z3}
that the similar results obtained for viscoelasticity are due to
this ``hidden variational structure.''

Here, we show that this is true, in an extremely strong sense.
Namely, we show that the equations of viscoelasticity with strain-gradient
effects may be put into the same form \eqref{struct}, but with $M$ only
{\it positive semidefinite} (i.e., $M+M^*\ge 0$) and not symmetric-
a structure that might be called ``generalized gradient'' or
``quasi-gradient'' form.
More, we show that, under an additional property \eqref{e1}
of viscoelasticity, satisfied automatically in the gradient case
$M=M^*$,
all of the results that have so far been shown for this
system, including the results on modulational dichotomy,
may be deduced entirely
from this quasi-variational structure,
constrained variational stability considerations like those used
in \cite{GSS,BJK} and elsewhere to study stability of Hamiltonian waves,
and a novel connection observed between the Jacobian 
and Lagrangian multipliers arising in constrained variational theory
and the Whitham modulation equations formally governing sideband behavior.
The latter we regard as the main new contribution of this paper.

Indeed, we are able to put each of the mentioned applications into
this framework and obtain a suite of similar results by these common
methods.
As in any investigation overlapping with classical calculus of variations
and Sturm--Liouville theory, many of our results in the simpler
cases considered here can be obtained by other means, and a great many
have (no doubt more than the ones we have been able to cite).
Still more of our results can be obtained alternatively in a slightly weakened
form by direct Evans function computations, which give a sort of
mod two version of Sturm--Liouville applying in more general circumstances;
see for example, the discussion of Sections \ref{s:SL} and \ref{ozrmk}.

However, a significant number of our results appear also to be new,
in particular (surprisingly) modulational instability of
multiply periodic Cahn--Hilliard solutions (though see Remark \ref{arndrmk}
on the two-dimensional case), Theorem \ref{V-E-W-dich};
modulational instability of singly and multiply periodic coupled
conservation--reaction diffusion (in particular, Keller--Segel) solutions,
Theorem \ref{cRD-dich};
universal modulational instability, without the technical condition of
monotonicity of period vs. amplitude, of one-dimensional periodic
viscoelasticity waves, Theorem 5.1; and modulational instability of the
branch of planar multi-dimensional deformation solutions of viscoelasticity
connected by transversal homotopy to the decoupled, one-dimensional case,
Corollary \ref{contcor}.
By related techniques,
we recover also versions of the results of \cite{GZ,Z3,PS2}
on instability of homoclinic ``spike'' type solutions on the whole line;
see Section \ref{s:spikes}.

Also, though many of these ideas
appear elsewhere in  disparate sources,
we feel that there is some value in putting a broad variety of
problems into a common framework, continuing and
extending the unification initiated in \cite{PS1,PS2}.
In particular, we both formalize the heuristic
relation pointed out in \cite{PS2} between
coupled conservative--reaction diffusion
and Cahn--Hilliard equations and extend this to the quasi-gradient
viscoelastic case.
We discuss also the natural connection to the Hamiltonian case
of systems $(d/dt)X(t)=J\nabla \mathcal{E}(X(t))$ with $J=-J^*$ skew
symmetric.
Here, interestingly,
the appearance of conservation laws for generators
$-MA$ with $M$ noninvertible is paralleled by the appearance for
$J$ noninvertible of conservation
laws associated with noninvertibility of $J$ rather than Noetherian
symmetries; see \eqref{second_class}.
Finally, in Section \ref{s:revisit}, we give a unified explanation
of the previously-observed modulational dichotomies from the variational
point of view, showing that the mechanisms lying behind these are
unconstrained instability of $A$, together with
a relation between Morse
indices of the Floquet operators associated with $MA$ and $A$,
and the aforementioned property \eqref{e1} that the
center subspace of $MA$ consist of $\ker MA$.

\subsection{Discussion and open problems}
As concerns our primary interest of classifying stability of
periodic viscoelastic patterns, we have resolved some of the main
problems pointed out in \cite{OZ1,Y}, but also pointed the way
to some interesting further directions for investigation.
In particular, it is an extremely interesting open
question whether there could be a bifurcation to a stable branch
of planar multideformational patterns as solutions are continued
from the decoupled, effectively one-dimensional deformation, case.
Another interesting open question is whether
there are there other periodic viscoelastic
solutions that are `genuinely multi-dimensional''
in the sense that they are not continuable from the decoupled case.
These and related problems for homoclinic solutions are being investigated
in \cite{BYZ} by numerical Evans function techniques.

We note that an efficient alternative Evans method is proposed here
for self-adjoint problems, based on the Sturm-Liouville duality
between spatial and frequency variables $x$ and $\lambda$;
see the last paragraph of Section \ref{s:SL}.
This also would be interesting to explore.
Finally, it would be interesting to look for stable multiply
periodic viscoelasticity solutions, perhaps corresponding to waves
seen in buckling of viscoelastic materials \cite{HLFY,BCS});
see Section \ref{s:multivisc}.

\medskip
{\bf Note.}  We have been informed after the completion of this
work that our analysis in Section \ref{s:CH}
overlaps with unpublished results of Bronski and Johnson on
nonlocal Cahn--Hilliard equations.

\section{Preliminaries: variational stability and Sylvester's law
of inertia}\label{s:prem-res}

We begin by recalling a fundamental relation (\eqref{sigrel} below)
underlying the Hamiltonian stability results of \cite{GSS}, \cite{BJK}, etc.,
between stability of constrained critical points and the Jacobian
$\partial c/\partial \omega$ of the map
from Lagrange multipliers to constraints.
This principle seems to go far back, at least implicitly, in the physics
(and particularly the thermodynamics) literature \cite{C,LL}.
For a general treatment from the Calculus of Variations point of
view, see for example \cite{M} and references therein.\footnote{
Curiously, the Jacobian $(\partial c/\partial\omega)( \omega^*)$
emphasized in \cite{GSS} and later works is never explicitly
mentioned in \cite{M}, even though the physical importance of the
Lagrange multiplier is stressed. Rather, it appears in the
equivalent form $-y^T A_{\omega^*} y = -\langle \nabla C, y\rangle$,
$A_{\omega^*} y:= \nabla C$, arising naturally in the analysis (and
in practice most easily computable).} Here, we give a particularly
simple derivation\footnote{Essentially a redaction of the argument
of \cite{M}} based on Sylvester's law of inertia.

\subsection{Constrained minimization and Lagrange Multiplier formulation}\label{s:const}

Consider the general constrained minimization problem
\be\label{min}
\min_{X:C(X)=c_0} \mathcal{E}(X),\quad C=(C_1,\dots,C_r):\HM\to \RM^r,
\ee
on a Hilbert space $\HM$. Here, the constraint function $C$ is $C^2$ on $\mathbf{H}$. Also, we assume that $\mathcal{E}$ is differentiable on $\mathrm{Diff}(\mathcal{E})$ a dense subspace of $\mathbf{H}$, and that for every $X\in\mathrm{Diff}(\mathcal{E})$, $\nabla\mathcal{E}(X)$ and $\nabla^2\mathcal{E}(X)$ are well-defined at least on a subset of $\mathrm{Diff}(\mathcal{E})$ that is dense in $\mathbf{H}$.
In addition, we assume
\be\label{h1} \tag{H1}
\hbox{\rm $\nabla C$ is full rank in the vicinity
under consideration.}
\ee
Under assumption \eqref{h1}, the feasible set $\{C(X)=c_0\}$
by the Implicit Function Theorem
defines a local $C^2$ manifold of co-dimension $r$ in the vicinity of any point $X^*$ with
$C(X^*)=c_0$, tangent to $\nabla C(X^*)^\perp$,\footnote{
We use this notation to denote $\{\nabla C_j(X^*):j=1,\dots,r\}^\perp$,
slightly abusing the notation.} on which one may in principle carry out standard Calculus of Variations on local coordinate
charts.

Alternatively, one may solve an optimization problem on an
enlarged flat space, following the method of Lagrange.
Introduce the associated Lagrange multiplier problem
\be\label{lag}
\min_{X\in \HM,\, \omega \in \RM^r}\mathcal{E}_\omega(X)\quad\mbox{where}\quad \mathcal{E}_\omega(X):= \mathcal E(X)+
\omega
\cdot (C(X)-c),
\ee
$\omega\in \RM^r$ are Lagrange multipliers, and the
constrained Hessian
(in $X$ only) is given by:
\be\label{pos}
A_\omega:=\nabla^2 \mathcal{E}_\omega(X^*)=
\nabla^2 \mathcal{E}(X^*)+\omega \cdot \nabla^2 C(X^*).
\ee Critical points
$(X^*,\omega^*)\in\mathrm{Diff}(\mathcal{E})\times\RR^r$ of
\eqref{lag} satisfy $C(X^*)=c_0$ and \be\label{critlag}
\nabla \mathcal{E}(X^*)=-\sum\limits_{j=1}^{r}\omega_j^* \nabla C_j(X^*).
\ee

We recall the following well-known characterizations of stability.

\bpr [First derivative test] \label{1var}
An element of $X^*\in\mathrm{Diff}(\mathcal{E})$ is a critical point of \eqref{min} if and only if there exists $\omega^*$ such that $(X^*,\omega^*)$ is
a critical point of \eqref{lag} for some $\omega^*\in \RM^r$.
\epr
\begin{proof}
Critical points of \eqref{min} satisfy
$C(X^*)=c_0$ and $\langle\nabla \mathcal{E}(X^*),Y\rangle=0$ for $Y\in \nabla C(X^*)^\perp$,
yielding $ \nabla \mathcal{E}(X^*)\in \mathrm{Span}\{\nabla C_j(X^*):j=1,\dots,r\}, $ or \eqref{critlag}.
\end{proof}

\bpr [Second derivative test] \label{2var}
Assume hypothesis \eqref{h1}. Then the following assertions hold true:
\begin{enumerate}
\item[(i)] A critical point $X^*\in\mathrm{Diff}(\mathcal{E})$ is
strictly stable in $\mathrm{Diff}(\mathcal{E})$
(a strict local minimum of \eqref{min}
among competitors in $\mathrm{Diff}(\mathcal{E})$)
if $A_{\omega^*}$ is positive definite on
$\nabla C(X^*)^\perp\cap \mathrm{Diff}(\mathcal{E})$,
\item[(ii)] It is unstable
in $\mathrm{Diff}(\mathcal{E})$,
(not a nonstrict local minimizer of \eqref{min}
among competitors in $\mathrm{Diff}(\mathcal{E})$),
hence in $\HM$,
if $A_{\omega^*}$ is not positive semi-definite on
$\nabla C(X^*)^\perp\cap \mathrm{Diff}(\mathcal{E})$,
\end{enumerate}
\epr

\begin{proof}
Equivalently, $(d^2\mathcal{E}(X(t))/dt^2)|_{t=0}< 0$
(resp. $\not \le 0$)
for any path $X(t)\in\mathrm{Diff}(\mathcal{E})$ with $X(0)=X^*$, $C(X(t))\equiv c_0$.
Taylor expanding, we have that
$ y(t):=X(t)-X_*= tY_1 + t^2Y_2+\mathcal{O}(t^3)$,
where $Y_1\in \nabla C(X^*)^\perp$ and
$\langle \nabla C_j (X^*),Y_2\rangle=- \frac12
\langle Y_1, \nabla^2 C_j(X^*)Y_1\rangle $ for any $j=1,\dots,r$.
Using, \eqref{critlag} we obtain that
$$
\begin{aligned}
\frac12(d^2\mathcal{E}(X(t))/dt^2)|_{t=0}&=
\langle Y_1, \frac12 \nabla^2 \mathcal{E}(X^*)Y_1\rangle+
\langle \nabla \mathcal{E}(X^*),Y_2\rangle\\
&=
\langle Y_1, \frac12 \nabla^2 \mathcal{E}(X^*)Y_1\rangle-\sum\limits_{j=1}^{r}\omega_j^*\
\langle  \nabla C_j(X^*),Y_2\rangle\\
&=
\langle Y_1, \frac12 \nabla^2 \mathcal{E}(X^*)Y_1\rangle-\frac12\sum\limits_{j=1}^{r}\omega_j^*\
\langle  Y_1, \nabla^2 C_j(X^*)Y_1\rangle
\\
&=
\frac12 \langle Y_1, A_{\omega^*} Y_1\rangle,
\end{aligned}
$$
from which both assertions readily follow.
\end{proof}

\br
Note that the second variation conditions for the Lagrange multiplier
problem are with respect to the constrained
Hessian, and along the tangent feasible space $\nabla C(X^*)^\perp$
only.
\er

\br\label{strongrmk} The conclusions of this section are strongest,
evidently, when $\mathrm{Diff}(\mathcal{E})=\HM$, in which case
strict stability implies variation in energy controls variation in
$\HM$, or \be\label{2control} \hbox{\rm $\|X_1-X^*\|^2_{\HM}\lesssim
\mathcal{E}(X_1)- \mathcal{E}(X^*)$ for $\|X_1-X^*\|_{\HM}$
sufficiently small.} \ee This holds, for example, for the
Cahn--Hilliard and generalized KdV energies considered in Sections
\ref{s:CH} and \ref{s:gKdV}. However, the Keller--Segel and
viscoelastic energies considered in Sections \ref{s:KellerSiegel}
and \ref{s:visco} are not even continuous on $\HM$, hence the more
general phrasing here; see Remarks \ref{diffrmk} and \ref{contrmk}.
\er

\subsection{Sylvester's Law and relative signature}\label{s:index}
Let $Q:\dom(Q)\subset\HM\to\HM$ be a self-adjoint operator on $\HM$.
For $\alpha\in\{-,0,+\}$ we denote by $\mathcal{S}_\alpha(Q)$ the
set of all nontrivial {\em finite} dimensional subspaces $V\subset\dom(Q)$ such
that $Q_{|V}$ is negative definite, identically zero, and positive
definite, respectively. Define\footnote{Note that in the case of $\sigma_0$, we require $Q$, not the quadratic form $\langle \cdot,Q\cdot\rangle$, to be identically zero on $V$ so that $\sigma_0(Q)$ is the dimension of the kernel of $Q$.}
\be\label{def-signature1}\sigma_\alpha(Q)=
\left\{\begin{array}{ll}
\sup\{\dim V: V\in\mathcal{S}_\alpha(Q)\},&\mbox{if}\; \mathcal{S}_\alpha(Q)\ne\emptyset\\
0,&\mbox{if}\; \mathcal{S}_\alpha(Q)=\emptyset\end{array}\right.\in
\ZZ_+\cup\{\infty\},\quad\alpha\in\{-,0,+\}. \ee We introduce the
signature of the self-adjoint operator $Q$ by
\be\label{def-signature2}
\sigma(Q):=(\sigma_-(Q),\sigma_0(Q),\sigma_+(Q)). \ee Recall now the
following fundamental observation.

\bpr[Sylvester's Law of Inertia]\label{Syl}
The signature $\sigma$ is invariant under coordinate changes,
i.e., $\sigma(Q)=\sigma (S^*QS)$ for any bounded invertible coordinate
change $S$.
\epr

\begin{proof}
Invariance follows immediately since the change of variables $S$ is invertible and
$$V\in \mathcal{S}_\alpha(Q)\quad\mbox{implies that}\quad S^{-1}(V)\in\mathcal{S}_\alpha(S^*QS) \quad\mbox{and}\quad \dim V=\dim S^{-1}(V);$$
$$U\in \mathcal{S}_\alpha(S^*QS)\quad\mbox{implies that}\quad  S(U)\in\mathcal{S}_\alpha(Q)\quad\mbox{and}\quad \dim U=\dim S(U).$$
\end{proof}
In the sequel, we will need the following proposition:
\bpr\label{p2.5} Assume $Q:\dom(Q)\subset\HM\to\HM$ is a
self-adjoint operator. The the following assertions hold true:
\begin{enumerate}
\item[(i)] If $B$ is a self-adjoint, invertible, $s\times s$ matrix and $R:\dom(Q)\otimes\CC^s\to\HM\otimes\CC^s$ is defined by $R=\bp
Q&0\\0&B\ep$, then
\be\label{sigma-sum}
\sigma_\alpha(R)=\sigma_\alpha(Q)+\sigma_\alpha(B),\quad\alpha\in\{-,0,+\}.\ee
\item[(ii)] If $\sigma_\alpha(Q)$, $\alpha\in\{-,0,+\}$, is finite then it is equal to the dimensions of the stable, zero, and
unstable eigenspaces of $Q$, respectively;
\end{enumerate}
\epr
\begin{proof} (i) First we note that our assertion is true if $B$ is a nonzero real
number. By induction, we conclude that
\eqref{sigma-sum} is also true provided the
matrix $B$ is a diagonal matrix with nonzero real entries.

Next, we look at the general case when $B$ is a general
self-adjoint matrix. Then, there exists $P_B$ an
orthogonal matrix and $D$ a diagonal matrix such that
$B=P_B^TDP_B$. Since any orthogonal matrix is unitary we have that
$P_B^T=P_B^*$, $\bp I_{\HM}&0\\0&P_B\ep$ is invertible and
$$R=\bp I_{\HM}&0\\0&P_B\ep^*\bp A&0\\0&D\ep \bp I_{\HM}&0\\0&P_B\ep.$$
Assertion (i) follows from Proposition~\ref{Syl} and the observation that \eqref{sigma-sum} is true for diagonal matrices $B$ with nonzero real entries.

We note that assertion (ii) is trivial for $\alpha=0$. Assume that
$\sigma_+(Q)$ is finite. Then, there exists a subspace
$\tilde{\HM}$, a self-adjoint, negative-definite operator
$\tilde{Q}:\dom(\tilde{Q})\subset\tilde{\HM}\to\tilde{\HM}$ and $D$
a diagonal $s\times s$ matrix with positive real entries, where $s$
is equal to the dimension of the unstable space of $Q$, so that
$Q=\bp \tilde{Q}&0\\0&D\ep$. Since $\sigma_+(\tilde{Q})=0$ and
$\sigma_+(D)=s$, our conclusion follows immediately from (i).
\end{proof}

Assume now

\be\label{h2}\tag{H2}
\hbox{\rm Near $(X^*, \omega^*)$, there is a $C^1$ family $(X, c)(\omega)$
of critical points of \eqref{lag}.}
\ee

\noindent This follows, for example, by the Implicit Function Theorem,
applied to \eqref{critlag} if $\ker A_{\omega^*}$ is trivial
and also $A_{\omega^*}$ is uniformly invertible.
Another situation in which \eqref{h2} follows, of particular interest in the
applications considered here, is when $\ker A_{\omega^*}$ is due to
group symmetries of the problem leaving both energy $\mathcal{E}$ and
constraints $C$ invariant.
and also $A_{\omega^*|_{(\ker A_{\omega^*})^\perp}}$ is uniformly invertible,
i.e., $A_{\omega^*}$ has a spectral gap.
In general, it may be obtained as part of an existence theory
for Euler-Lagrange equation \eqref{critlag} carried out by other means.
We note that \eqref{h2} in any case implies also
\be\label{perprel}
\ker A _{\omega^*}\perp \nabla C(X^*),
\ee
(by Fredholm alternative), as holds automatically when
$\ker A_{\omega_*}$ is generated by group invariances.

Assume also
\be\label{h3}\tag{H3}
\hbox{\rm The matrix  $(\partial c/\partial \omega)(\omega^*)$ is invertible.}
\ee

Differentiating the Euler--Lagrange relation \eqref{critlag} with
respect to $\omega$, we obtain the key relation
\be\label{orthrel}
A_{\omega^*} (\partial X/\partial \omega_j)(\omega^*)= - \nabla C_j(X^*)\quad\mbox{for all}\quad j=1,\dots,r
\ee
and its consequence
\be\label{hessrel}
(\partial c_k/\partial \omega_j)(\omega^*)=
-\langle(\partial X/\partial \omega_j)(\omega^*), A_{\omega^*}
(\partial X/\partial \omega_k)(\omega^*)\rangle\quad\mbox{for all}\quad j,k=1,\dots,r,
\ee
from which we see that $(\partial c/\partial \omega)(\omega^*)$ is
a symmetric form. Our main result is the following identity relating the signatures
of $A_{\omega^*}|_{\nabla C(X^*)^\perp}$, $A_{\omega^*}$,
and $(\partial c/\partial \omega)(\omega^*)$.

\bt\label{main}
Assuming \eqref{h1}--\eqref{h3},
\be\label{sigrel}
\sigma(A_{\omega^*})=\sigma\Bigl(A_{\omega^*}|_{\nabla C(X^*)^\perp}\Bigr )+\sigma\Bigl(-\partial c/\partial \omega)(\omega^*)\Bigr).
\ee
\et

\begin{proof}
To prove the theorem we use  Sylvester's Law. First, we note that
from \eqref{h3}, \eqref{orthrel} and \eqref{hessrel} it follows that
\be\label{sp-sum} \mathbf{H}=\nabla
C(X^*)^\perp\oplus\mathrm{Span}\{(\partial
X/\partial\omega_j)(\omega^*):j=1,\dots,r\}. \ee  Next, we denote $\Pi_{\nabla C(X^*)^\perp}$ and
$\Pi_{\mathrm{Span}\{(\partial
X/\partial\omega_j)(\omega^*):j=1,\dots,r\}}$  the orthogonal
projections onto $\nabla C(X^*)^\perp$ and $\mathrm{Span}\{(\partial
X/\partial\omega_j)(\omega^*):j=1,\dots,r\}$, respectively. Using
\eqref{sp-sum} we infer that $S: \nabla
C(X^*)^\perp\times\mathrm{Span}\{(\partial
X/\partial\omega_j)(\omega^*):j=1,\dots,r\}\to\mathbf{H}$ defined by
\be\label{S} S\bp h_1\\h_2\ep=h_1+h_2 \ee is bounded, invertible
with bounded inverse. Moreover, from \eqref{orthrel} and
\eqref{hessrel} we obtain that \be\label{inv1} \Pi_{\nabla
C(X^*)^\perp}A_{\omega^*}\Pi_{\mathrm{Span}\{(\partial
X/\partial\omega_j)(\omega^*):j=1,\dots,r\}}=0. \ee Since
$A_{\omega^*}$ and the orthogonal projections are self-adjoint
operators, passing to the adjoint in \eqref{inv1} yields
\be\label{inv2} \Pi_{\mathrm{Span}\{(\partial
X/\partial\omega_j)(\omega^*):j=1,\dots,r\}}A_{\omega^*}\Pi_{\nabla
C(X^*)^\perp}=0. \ee Finally, from \eqref{S}--\eqref{inv2} we
conclude that \be\label{rep} S^*A_{\omega^*} S= \bp
 \Pi_{\nabla C(X^*)^\perp} A_{\omega^*}|_{\nabla C(X^*)^\perp} & 0\\
0 & \Pi_{\mathrm{Span}\{(\partial
X/\partial\omega_j)(\omega^*):j=1,\dots,r\}}A_{\omega^*}|_{\mathrm{Span}\{(\partial
X/\partial\omega_j)(\omega^*):j=1,\dots,r\}} \ep. \ee Next, we note
that from \eqref{orthrel} and \eqref{hessrel} it follows that
\be\label{s2as2} \Pi_{\mathrm{Span}\{(\partial
X/\partial\omega_j)(\omega^*):j=1,\dots,r\}}A_{\omega^*}h=-\sum\limits_{j,k=1}^{r}
(\partial c_k/\partial\omega_j)(\omega^*)\langle h,(\partial
X/\partial\omega_k)(\omega^*)\rangle (\partial
X/\partial\omega_j)(\omega^*), \ee which implies that
\be\label{c-s2as2} \sigma\Bigl(\Pi_{\mathrm{Span}\{(\partial
X/\partial\omega_j)(\omega^*):j=1,\dots,r\}}A_{\omega^*}|_{\mathrm{Span}\{(\partial
X/\partial\omega_j)(\omega^*):j=1,\dots,r\}} \Bigr)= \sigma
\Bigl(-(\partial c/\partial \omega)(\omega^*)\Bigr). \ee Since
$\sigma( \Pi_{\nabla C(X^*)^\perp} A_{\omega^*}|_{\nabla
C(X^*)^\perp})=\sigma(A_{\omega^*}|_{\nabla C(X^*)^\perp})$, the
theorem follows shortly from Proposition~\ref{p2.5}(i), \eqref{rep}
and \eqref{c-s2as2}.
\end{proof}

\subsection{Variational stability}\label{s:var}
Denoting $\ell:=\dim \ker A_{\omega^*}$, we strengthen \eqref{h2} to
\be\label{h2'}\tag{H2'}
\hbox{\rm Near $(X^*, \omega^*)$, there is a $C^1$ family $(X, c)(\omega,s)$
of critical points of \eqref{lag},
$s\in \RM^\ell$.}
\ee
\noindent With \eqref{h3}, this implies that, for fixed $c$, the
set of minimizers near $X^*$
forms a $C^1$ $\ell$-dimensional family
\be\label{minfam}
\{X(\omega(c),s): \, s\in \RM^\ell\}.
\ee
Differentiating \eqref{minfam} with respect to $s$,
noting that $C(X(\omega(c),s))\equiv \const$,
and recalling \eqref{critlag}, we find that
\be\label{constcrit}
\hbox{\rm
$\mathcal{E}(X(\omega(c),s))\equiv \const$ along the family of nearby critical points.}
\ee

\begin{definition}\label{stabdef}
A critical point $X^*$ of \eqref{min} is
locally strictly orbitally stable in $\hat \HM\subset \HM$ if, for $X\in \hat \HM$
in a sufficiently small $\HM$-neighborhood of $X^*$,
$\mathcal{E}(X)\geq \mathcal{E}(X^*)$,
with equality only if $X$ lies on the set of nearby critical points,
and orbitally stable in $\hat \HM\subset \HM$
if,
for $X\in \hat \HM$ in a sufficiently small $\HM$-neighborhood of $X^*$,
$\mathcal{E}(X)\geq \mathcal{E}(X^*)$.
\end{definition}

We add the further hypothesis,
denoting $\HM$-distance from $v\in \HM$ to a set $S\subset \HM$
by $d_{\HM}(v,S)$:
\be\label{h4}\tag{H4}
\hbox{\rm
$\sigma_-(A_{\omega^*}|_{\nabla C(X^*)^\perp} )=0$
$\Rightarrow$ for $K>0$, all $v\in \nabla C^\perp(X^*)$,
 $d_{\HM}(v, \ker A_{\omega^*|_{\nabla C^\perp(X^*)}})
\leq K\langle v, A_{\omega^*}v\rangle_{\HM}$.
}
\ee

\bc\label{stabcor}
Assuming
\eqref{h1}, \eqref{h2'}, \eqref{h3},
and \eqref{h4},
a critical point $X^*$ of \eqref{min}
is strictly orbitally stable in $\mathrm{Diff}(\mathcal{E})$
if and only if
\be\label{sorbvar}
 \sigma_- (A_{\omega^*}) = \sigma_-\Bigl( (-\partial c/\partial \omega)(\omega^*)\Bigr)
\ee
and $\dim \ker A_{\omega_*}|_{\nabla C^\perp(X^*)}=\ell$,
i.e., family  \eqref{h2} is transversal,
and orbitally stable in $\mathrm{Diff}(\mathcal{E})$ only if
\be\label{orbvar}
 \sigma_- (A_{\omega^*}) = \sigma_-\Bigl( (-\partial c/\partial \omega)(\omega^*)\Bigr),
\ee
where $\omega^*$ is the (unique) Lagrange multiplier associated with $X^*$.
\ec

\begin{proof}
Immediate, from \eqref{sigrel}, \eqref{constcrit},
and \eqref{h4},
together with a second-order Taylor expansion
like that of Proposition \ref{2var}
along perpendiculars to the manifold \eqref{h2} where $C\equiv c_*$.
\end{proof}

\br\label{4rmk}
In the PDE context, Assumption \eqref{h4}
effectively limits the regularity of the space
$\HM$.  More generally, the $\HM$-norm cannot be stronger
than any norm under which $A_{\omega_*}$ is bounded,
or \eqref{h4} would fail.
If $A_{\omega_*}$ is bounded on $\HM$
(in particular, if $\mathrm{Diff}(\mathcal{E})=\HM$),
then $\HM$ is determined.
In applications, \eqref{h4} typically follows by coercivity
of $A_{\omega_*}$, in the form of a G\r{a}rding inequality,
together with the presence of a spectral gap.
For all of the (PDE) examples of this paper, there is a unique space $\HM$
on which $A_{\omega_*}$ is both bounded and coercive in the
strong sense of \eqref{h4}.
\er

\section{Time-evolutionary stability}\label{s:evolution}
Corollary \ref{stabcor} yields
stability results for time-evolutionary systems
\be\label{te}
(d/dt)X=\mathcal {F}(X)
\ee
with variational structure,
in particular, for {\it energy conserving} (e.g., generalized Hamiltonian),
and {\it energy dissipating} (e.g., quasi-gradient type) systems,
as we now describe.

For this section, it will be convenient to introduce
a graded sequence of Hilbert spaces $\mathbf{H}_1\subset \mathbf{H} \subset \mathbf{H}_2$,
with associated inner products $\langle \cdot, \cdot\rangle_{\mathbf{H}_j}$,
$\langle \cdot, \cdot\rangle_{\mathbf{H}}$ of varying strength,
and gradients $\nabla_{\mathbf{H}_j}$, satsifying:
\be\label{b1}\tag{B1}
\hbox{\rm
For $X\in\mathbf{H}_1$, $\nabla_{\mathbf{H}_2}$
and $\nabla_{\mathbf{H}_2}^2$ of
$\mathcal{E}$ and $C$ are well-defined.
}
\ee
In particular, this implies that $\HM_1\subset {\rm Diff}(\mathcal{E})$.

\subsection{
Generalized Hamiltonian systems with constraints}\label{s:genham}
We first recall briefly the context considered in \cite{GSS}
of a generalized Hamiltonian system
\ba\label{e:genham}
(d/dt) X&= J(X) \nabla_{\mathbf{H}_2} \mathcal{E}(X),
\quad t\ge 0,\\
X(0)&\in \HM, \ea
$J=-J^*$,
which has additional conserved quantities
\be\label{C}
\hbox{\rm $C(X(t))\equiv c_0\in \RM^r$, i.e., $\langle J(X(t))\nabla
C_j(X(t)), \nabla \mathcal{E}(X(t))\rangle\equiv 0$,}\;
j=1,\dots,r,\;t\geq 0 \ee
besides the Hamiltonian
$\mathcal{E}(X(t))$, which is always conserved, by skew-symmetric
structure
\be\label{skw-sym} \langle \nabla
\mathcal{E}(X(t)),J(X(t))\nabla\mathcal{E}(X(t))\rangle=0. \ee
Here $J=-J^*$, is assumed to be a closed, densely
defined linear operator with $X$-independent domain.

For such systems, constraints may be divided
into two rather different types. The first type, familiar from the
classical mechanical case, are constraints $C_j:\mathbf{H}\to \RM$
for which \be\label{first_class} g_j(X):=J(X)\nabla C_j(X) \neq
0,\quad\mbox{for all}\quad j=1,\dots,r. \ee These are associated
with group symmetries $G_j(\theta,\cdot)$ leaving the Hamiltonian
$\mathcal{E}$, and thus the flow of \eqref{e:genham}, invariant,
generated by the $g_j$, through \be\label{genode}
(d/d\theta)G_j(\theta;X)=g_j(G_j(\theta;X)), \qquad G_j(0;X)=X,\quad
j=1,\dots,r. \ee
When $J$ is one-to-one,\footnote{
Note: by skew-symmetry, $J$ is one-to-one if it is onto,
as sometimes assumed \cite{GSS}.
We remark that $J$ is also Fredholm in the typical situation where we have
such non-Noetherian constraints, for example in the case of NLS ($J=\mathrm{i}$)
or gKdV ($J=\partial_x$) on bounded periodic domains, in which
case $J$ is one-to-one if and only if it is onto.
}
these are the only type of
conserved quantities that can occur, and every group invariance
determines a conserved quantity, following Noether's Theorem,
through the relation $\nabla C_j(X):=J^{-1}g_j(X)$, where
$g_j(X):=G_j'(0;X)$, $j=1,\dots,r$.

When $J$ is not one-to-one, however, as for example in the KdV and
Boussinesq cases considered in \cite{GSS,Z2,BJK}, for which
$J=\partial_x$, conserved quantities may or may not correspond to
symmetries. For related discussion, see \cite{Z2}. In this case, we
identify a second class of constraints, closer to the themes of this
paper, for which \be\label{second_class} J\nabla C_j \equiv
0\quad\mbox{for all}\quad j=1,\dots,r. \ee These are not associated
to a nontrivial group symmetry, since $g_j=J\nabla C_j$,
$j=1,\dots,r$ generates the identity.

For example, in the case considered in \cite{GSS} of KdV on the whole line,
$r=1$, and the constraint is associated to a group invariance
$G(s)$ consisting of spatial translation by distance $s$.
For the generalized KdV case considered in \cite{BJK} on bounded periodic
domains, $r=2$, only one of the two constraints
corresponding to translation invariance and the
other corresponding to conservation of mass (integral over
one period).

Denote the (possibly trivial) symmetries generated by constraints
$C_j$ through \eqref{genode} as $G_j(\theta;X) $, $j=1,\dots,r$.

Following \cite{GSS}, we assume that \eqref{e:genham} has a well-defined
solution operator in $\HM$ conserving the formal invariants $C$ and
$\mathcal{E}$:
\be\label{b2}\tag{B2}
\hbox{\rm
Equation \eqref{e:genham} admits a (weak) solution $X(t)\in \HM$
continuing so long
}
\ee
$$
\hbox{\rm
as $\|X(t)\|_{\HM}<+\infty$,
with $C(X(t))$ and $\mathcal{E}(X(t))$ identically constant.
}
$$

\begin{definition}\label{travdef}
A generalized traveling-wave solution of \eqref{e:genham} is
a solution whose trajectory is contained in the orbit of its
initial point $X(0)$ under the action of the invariance group
generated by $\{G_j:j=1,\dots,r\}$.
\end{definition}

\bl\label{hamcrit}
A solution emanating from $X(0)\in \mathrm{Diff}(\mathcal{E})$
is a generalized traveling wave
if and only if $X(0)$ is a critical point of the constrained minimization
problem \eqref{min}, in which case $X(0)$ is a stationary
solution of the shifted equation $(d/dt)X=J(X)\nabla \mathcal{E}_{\omega^*}(X)$,
with associated linearized operator
\be\label{critfactor}
L_{\omega^*}(X(0)):=d\mathcal{F}_{\omega^*}(X(0))=J(X(0))A_{\omega^*}.
\ee
\el

\begin{proof}
By group invariance of \eqref{e:genham}, it is equivalent that
$X'(0)=J(X(0))\nabla \mathcal{E}_{\omega^*}(X(0))$ lie in the
tangent space $\Span \{J(X(0))\nabla C_j(X(0)):j=1,\dots,r\}$ of the invariance group. Since
$\ker J(X(0))\subset \Span\{\nabla C_j(X(0)):j=1,\dots,r\}$, this is equivalent to
\eqref{critlag}, which in turn is equivalent to stationarity as a
solution of the shifted equation. Writing, equivalently,
$\mathcal{F}=J \nabla \mathcal{E}_{\omega^*}$, differentiating in
$X$, and using
\eqref{critlag},
we obtain \eqref{critfactor}.
\end{proof}

\begin{definition}\label{tstabdef}
A generalized traveling wave $X(t)\in\mathbf{H}_1$ is $\mathbf{H}_1\to\mathbf{H}$
orbitally stable if
for every $\eps>0$, there is $\delta>0$ such
that, for  $Y(0)$ within $\delta$ distance in $\mathbf{H}_1$ of $X(0)$, $Y(t)$
exists for all $t\ge 0$ and
remains within $\eps$ distance in $\mathbf{H}$ from the orbit of $X(0)$ under
the group of invariances generated by $\{G_j:j=1,\dots,r\}$.
\end{definition}

Given a generalized traveling wave solution $X(t)$, we assume
further: \be\label{c1}\tag{C1} \hbox{\rm $\ker L_{\omega^*}(X(0))\subset
\Span \{J(X(0))\nabla C_j(X(0)):j=1,\dots,r\}$ (transversality), }
\ee that is, that all neutral directions are accounted for by group
symmetry.

\bpr\label{gssprop} Assuming \eqref{h1}, \eqref{h2'},\eqref{h3}--\eqref{h4},
\eqref{b1}-\eqref{b2}, \eqref{c1}, a generalized traveling-wave
solution $X(t)$ of \eqref{e:genham} is
$\mathbf{H}\to\mathbf{H}$
orbitally stable if $X(0)\in\mathbf{H}_1$ is a strict local orbital
minimizer of \eqref{min}, or, equivalently, the signature of
$(\partial c/\partial \omega)(\omega^*)$ satisfies condition
\eqref{sorbvar},
and ${\rm Diff}(\mathcal{E})=\HM$. \epr

\begin{proof}
By \eqref{h1}, the level sets of $C$ smoothly foliate
$B(X(0),\delta)$, whence it is sufficient to consider perturbations
such that $C(Y(0))=C(X(0))$. As
$\mathcal{E}(Y(t))-\mathcal{E}(X(t))$ is constant, and, by strict
orbital minimality,
majorizes
the $\|\cdot\|_\mathbf{H}^2$ distance of $Y(t)$ from the manifold of equilibrium
solutions through $X(0)$, which, by \eqref{c1}, is the group orbit of
$X(0)$,\footnote{
Here, we are using group invariance to conclude from local majorization
a global property as well;
this, precisely, is the step where we require \eqref{c1}.
}
we obtain global existence by \eqref{b1}
and continuation,
and $\eps$-boundedness by the assumed continuity of $\mathcal{E}$ with
respect to $\|\cdot\|_\mathbf{H}$.
\end{proof}

\br\label{phasermk}
The notion of orbital stability defined above does not include
phase-asymptotic stability; that is, it does not assert that $Y(t)$
stays close to $X(t)$.  It is not difficult to see by specific example
(for instance, the different-speed traveling-waves of
 \cite{GSS}), that this is all that we can say.
\er

\br\label{genrmk} Condition \eqref{c1} could be weakened to the
assumptions that for $c$ near $c^*$ the manifold of critical points
\eqref{minfam} extends globally as the orbit under the group
symmetries generated by $\{G_j:j=1,\dots,r\}$ of a (for example,
compact) manifold of strict local orbital minimizers of \eqref{min}
on which $A_\omega$ has a uniform spectral gap, a possibility that
does not seem up to now to have been discussed.
\er

\br\label{oneway}
The ``one-way'' stability result of Proposition \ref{gssprop}
recovers the result of \cite{GSS} for solitary waves of KdV;
for $J$ onto (one-to-one), a converse is given in \cite{GSS}
for $r=1$.
In general, time-evolutionary stability does not imply constrained
variational stability but only certain parity information on the
number of unstable eigenvalues.
For complete stability information, a more detailed analysis must
be carried out, such as the Krein signature analysis in \cite{BJK}.
\er

\subsection{Quasi-gradient systems with constraints}\label{s:gengrad}

We now turn to the situation of our main interest,
of {\it quasi-gradient systems}
\be\label{gengrad}
(d/dt)X= -M(X) \nabla \mathcal{E}(X),
\qquad
\frac12(M+M^*)\ge 0,
\ee
with constants of motion $ C(X(t))\equiv c_0\in \RM^r$ for which
\begin{equation}\label{E-M}
(d/dt) \mathcal{E}(X(t))=
\langle\nabla \mathcal{E}(X(t)),  M((X(t))  \nabla \mathcal{E}(X(t))\rangle
\le 0,\end{equation}
guaranteed by
\be\label{M_class}
M\nabla C_j \equiv 0\quad\mbox{for all}\quad j=1,\dots,r,
\ee
similarly as in \eqref{second_class}.
Recall, for $M$ semidefinite, $\ker M=\ker M^*
\subset \ker \frac12(M+M^*)$,
by consideration of the semi-definite quadratic form $\frac12(M+M^*)$,
so that \eqref{M_class} implies also $\langle\nabla C_j,M\nabla\mathcal{E}\rangle\equiv 0$ for all $j=1,\dots,r$, giving $(d/dt)C(X(t))\equiv 0$.

For the Keller--Segel (chemotaxis) model considered in \cite{PS1,PS2},
$r=1$, with the single constraint corresponding to conservation
of mass (total population).
For the equations of viscoelasticity with strain-gradient effects on
bounded periodic domains, considered in \cite{OZ1,Y,BYZ},
$r=2d=2,4,6$, where $d=1,2,3$ is
the dimension of allowed deformation directions,
with the $2d$ constraints
corresponding to conservation of mass
in deformation velocity and gradient coordinates.

\bl\label{gradcrit}
$X(t)\equiv X(0)\in {\rm Diff}(\mathcal{E})$
is a stationary point of \eqref{gengrad}
if and only if it is a critical point of the constrained minimization
problem \eqref{min}, in which case the associated linearized operator is
\be\label{gcritfactor}
L_{\omega^*}(X(0)):=d\mathcal{F}_{\omega^*}(X(0))=-M(X(0))A_{\omega^*}.
\ee
\el

\begin{proof}
Essentially identical to the proof of Lemma \ref{hamcrit}.
\end{proof}

For the applications we have in mind, we find it necessary to
substitute for \eqref{b2} the milder assumption:
\be\label{b2'}\tag{B2'}
\hbox{\rm
For data $X(0)\in \HM_1$ near an equilibrium solution $X^*$,
\eqref{te} admits a strong
}
\ee
$$
\hbox{\rm
solution $X(t)\in \HM_1$, with $(d/dt)X(t)=\mathcal{F}(X(t)) \in\mathbf{H}_2$.
Moreover, this solution
}
$$
$$
\hbox{\rm
continues so long as
$X(t)$ remains sufficiently close in $\HM$ to an equilibrium.
}
$$

\br\label{b2'rmk} Condition \eqref{b2'} is satisfied for all of the
example quasigradient systems considered in this paper,\footnote{
The continuation property follows by nonlinear damping (energy)
estimates as introducted in \cite{Z4,Z1}, which show that higher
Sobolev norms are exponentially slaved to lower ones.. } whereas
\eqref{b2} is satisfied only for the Cahn--Hilliard equations
considered in Section \ref{s:CH}. \er

\subsubsection{Orbital stability}\label{s:orbital}
Assume analogously to \eqref{c1}:
\be\label{c1'}\tag{C1'}
\hbox{\rm
$\ker L_{\omega^*}$ is generated by group symmetries of \eqref{gengrad}.
}
\ee

\begin{definition}\label{astabdef}
A stationary solution $X(t)\equiv X(0)\in\mathbf{H}_1$ of
\eqref{gengrad} is orbitally stable if for every $\eps>0$, there is
$\delta>0$ such that, for  $Y(0)$ within $\delta$ distance in
$\mathbf{H}_1$ of $X(0)$, $Y(t)$ exists for all $t\ge 0$ and remains
within $\eps$ distance in $\mathbf{H}$ of the manifold
\eqref{minfam} of stationary solutions (critical points of
\eqref{min}) near $X(0)$.
It is asymptotically orbitally stable if it is orbitally stable and,
for $\|Y(0)-X(0)\|_{\mathbf{H}_1}$ sufficiently small,
$Y(t)$ converges in $\mathbf{H}$ to the
manifold \eqref{minfam}
of stationary solutions (critical points of \eqref{min}) near $X(0)$
having the fixed value $c=C(Y(0))$.
\end{definition}
\bpr\label{grad_gssprop} Assuming \eqref{h1}, \eqref{h2'},
\eqref{h3}--\eqref{h4}, \eqref{b1}-\eqref{b2'}, \eqref{c1'}, a stationary
solution $X(t)\equiv X(0)$ of \eqref{gengrad} is
$\mathbf{H}_1\to\mathbf{H}$ orbitally stable if $X(0)\in
\mathbf{H}_1$ is a strict local orbital minimizer of \eqref{min},
or, equivalently, the signature of $(\partial c/\partial
\omega)(\omega^*)$ satisfies condition \eqref{sorbvar}. \epr

\begin{proof}
Identical to that of Proposition \ref{gssprop}.
\end{proof}

\subsubsection{Asymptotic orbital stability: finite-dimensional case}\label{s:asymptotic}
Restricting for simplicity to finite dimensions, take without loss of generality
$\mathbf{H}_2=\mathbf{H}=\mathbf{H}_1$, by equivalence of finite-dimensional norms.

Now, make the additional assumption:
\be\label{d1}\tag{D1}
\hbox{\rm
Constant-energy solutions
$\mathcal{E}(X(t))\equiv \mathcal{E}(X(0))$
of \eqref{gengrad}
are stationary: $X(t)\equiv X(0)$.
}
\ee

\bl\label{noper}
A sufficient condition for \eqref{d1} is
$M=M^*$, or, more generally,.
\be\label{d1'}\tag{D1'}
\hbox{\rm
$\ker M=\ker \frac12(M+M^*)$,
}
\ee
\el

\begin{proof} Using \eqref{E-M} we infer
\begin{align*}
(d/dt)\mathcal{E}(X(t))|_{t=0}&=-\langle\nabla \mathcal{E}(X(0)) , M(X(0)) \nabla \mathcal {E}(X(0))\rangle\\
&=
-\frac12\langle\nabla \mathcal{E}(X(0)), (M+M^*)(X(0)) \nabla \mathcal {E}(X(0)) \rangle
\end{align*}
vanishes if and only if $\nabla \mathcal{E}(X(0)) $ lies in
$\ker\Bigl( \frac12(M+M^*)(X(0))\Bigr)=\ker M(X(0))$, whence $$\mathcal{F}(X(0))=
-M(X(0)) \nabla \mathcal{E}(X(0)) =0.$$
Noting, finally, that $\mathcal{F}_{\omega^*}\equiv \mathcal{F}$,
we are done.
\end{proof}

\br
As \eqref{d1'} shows, \eqref{d1} is typical of gradient systems.
That is, gradient systems typically
do not possess nontrivial constant-energy solutions
such as time-periodic solutions
or generalized traveling-waves, in contrast with the situation
of the Hamiltonian case.
\er

\bpr\label{lyap_prop} In finite dimensions, assuming \eqref{h1},
\eqref{h2'}, \eqref{h3},
\eqref{c1'}, and
\eqref{d1}, a stationary solution $X(t)\equiv X(0)$ of
\eqref{gengrad} is $\mathbf{H}\to \mathbf{H}$ asymptotically
orbitally stable if $X(0)\in \mathbf{H}$ is a strict local orbital
minimizer of \eqref{min}, or, equivalently, the signature of
$(\partial c/\partial \omega)(\omega^*)$ satisfies condition
\eqref{sorbvar}. \epr

\begin{proof}
First note that \eqref{h4} and \eqref{b1}--\eqref{b2'} hold always
in finite dimensions.
By orbital stability, $Y(t)$ remains in a compact neighborhood of
the orbit of $X(0)$ under the group symmetries of \eqref{gengrad},
whence, transporting back by group symmetry to a neighborhood of
$X(0)$, we obtain an $\omega$-limit set which is invariant up to
group symmetry under the flow of \eqref{gengrad}.
Moreover, by nonincreasing of $\mathcal{E}(Y(t))$, and local lower-boundedness
of $\mathcal{E}$, we find that $\mathcal{E}$ must be constant on the
$\omega$-limit set.
Combining these properties, we find that points of the $\omega$-limit
set must lie on constant-energy solutions, which, by \eqref{d1},
are stationary points of \eqref{gengrad}.  Noting that, up to
group symmetry, $Y(t)$ approaches its
$\omega$-limit set by definition, we are done.
\end{proof}

\begin{definition}\label{gradstabdef}
A stationary solution $X(t)\equiv X(0)\in
\mathbf{H}_1$ of \eqref{gengrad}
is $\mathbf{H}_1\to \mathbf{H}$ stable if for every $\eps>0$, there is $\delta>0$ such
that, for  $Y(0)$ within $\delta$ distance in $\mathbf{H}_1$ of $X(0)$, $Y(t)$
exists for all $t\ge 0$ and
remains within $\eps$ distance in $\mathbf{H}$ of $X(0)$.
It is $\mathbf{H}_1\to \mathbf{H}$
phase-asymptotically orbitally stable if it is stable and,
for $\|Y(0)-X(0)\|_{\mathbf{H}_1}\le \delta$, $Y(t)$ converges
time-asymptotically
in $\|\cdot\|_\mathbf{H}$ to a point $X^\sharp(Y(0))$ on the manifold \eqref{minfam}
of stationary solutions (critical points of \eqref{min}) near $X(0)$.
\end{definition}


\bpr\label{gradstab} In finite dimensions, assuming \eqref{h1},
\eqref{h2'},\eqref{h3},
\eqref{d1'}, a
stationary solution $X(t)\equiv X(0)\in \mathbf{H}$ of
\eqref{gengrad} is $\mathbf{H}\to\mathbf{H}$ stable only if $X(0)$
is a nonstrict local minimizer of \eqref{min}, or, equivalently, the
signature of $(\partial c/\partial \omega)(\omega^*)$ satisfies
condition \eqref{orbvar}. It is phase-asymptotically orbitally
stable, indeed, time-exponentially convergent, if $X(0)$ is a strict
local minimizer of \eqref{min}, or, equivalently, the signature of
$(\partial c/\partial \omega)(\omega^*)$ satisfies condition
\eqref{sorbvar}, \epr

\begin{proof}
Again, note that \eqref{h4} and \eqref{b1}-\eqref{b2'} hold always in finite dimensions.
(i) If $X(0)$ is not a minimizer, then there is a nearby point $Y(0)$
with $C(Y(0))=C(X(0))=c^*$ and
$\mathcal{E}(Y(0))< \mathcal{E}(X(0))$.
As $C(Y(t))\equiv C(Y(0))$, the only stationary points to which $Y(t)$ could
converge are on the manifold \eqref{minfam} with $|s|$ small and $c=c^*$,
along which $\mathcal{E}\equiv \mathcal{E}(X(0))$.
But, then, by continuity of $\mathcal{E}$ in $\mathbf{H}$, $\mathcal{E}(Y(t))$
would converge to $\mathcal{E}(X(0))$, a contradiction.  This
establishes the first claim.

(ii)
By \eqref{h1}, the level sets of $C$ smoothly foliate $B(X(0),\delta)$,
whence it is sufficient to consider perturbations such that
$C(Y(0))=C(X(0))$.
If $X(0)$ is a strict local minimizer, then,
in $B(X(0), \eps)$,  $\mathcal{E}(Y(t))$
majorizes the $\|\cdot\|$-distance from $Y(t)$
to the manifold of stationary solutions through $X(0)$.
Moreover,
$$
(d/dt)\mathcal{E}(Y(t))=
\langle \nabla \mathcal{E}(Y(t)), M(Y(t))\nabla \mathcal{E}(Y(t))\rangle
\lesssim - \|\Pi_{\nabla C(X(0))^\perp}\nabla \mathcal{E}(Y(t))\|^2
\lesssim
- \big(\mathcal{E}(Y(t) -\mathcal{E}(X^\sharp)\big),
$$
where $\Pi_{\nabla C(X(0))^\perp}$ denotes orthogonal projection onto
$\nabla C(X(0))^\perp$, and $X^\sharp$ denotes the orthogonal projection of
$Y(0)$ onto the set $C(X)=c_0$.
Here, the final inequality follows by the assumed
variational
stability.
Thus,
$\mathcal{E}(Y(t))-\mathcal{E}(X(0))$ decays time-exponentially so long
as $Y(t)$ remains in $B(X(0), \eps)$,
as does therefore $\|\Pi_{\nabla C(X(0))^\perp} \nabla \mathcal{E} (Y(t))\|$.
Thus,
$$
\frac {d}{dt}\langle (\partial X/\partial s)|_{s=0,c^*}, Y(t)-Y(0)\rangle
\le K\|M(Y(t))\nabla \mathcal{E}(Y(t))\|\le Ke^{-\eta t},\quad\mbox{for some}\quad
K,\eta>0,$$
and so
$\langle (\partial X/\partial s)|_{s=0,c^*}, Y(t)-Y(0)\rangle$
converges time-exponentially.
which, together with the
already established convergence to the manifold of equilibrium solutions,
gives stability and phase-asymptotic orbital stability,
with convergence at exponential rate.
\end{proof}

\subsubsection{Asymptotic orbital stability: infinite-dimensional case}\label{s:inf:asymptotic}
Under suitable assumptions guaranteeing compactness of the flow of
\eqref{gengrad}, the Lyapunov argument of Proposition
\ref{lyap_prop} may be generalized to the infinite-dimensional case,
yielding orbital stability with no rate; see \cite{Wr}. This applies
in particular to all of the examples given below, which are (or are
equivalent to) systems of parabolic type on bounded domains.
However, note that this
requires a spectral gap of $A_{\omega^*}$, under which we would expect (assuming
a correspondingly nice $M$, as also holds for our examples), rather,
exponential stability, whereas the exponential stability result of
Proposition \ref{gradstab}, first, does not generalize in simple
fashion to infinite dimensions, due to the presence of multiple
norms, and, second, requires the restrictive assumption \eqref{d1'},
which does not apply the important example of viscoelasticity,
below.

Moreover, there are interesting situations in which the essential
spectrum of $A_{\omega^*}$ and $d\mathcal{F}$, extend to the
imaginary axis; for example, stability of solitary waves in
viscoelastic, Cahn--Hilliard, or chemotaxis systems, as studied
respectively in \cite{Z3,HoZ}, \cite{Ho1,Ho3}, and \cite{PS2}. In
such situations, it has proven useful to separate the questions of
spectral and nonlinear stability; see \cite{Z1} for a general
discussion of this strategy. In particular, for each of the
above-mentioned settings, it can be shown that to show nonlinear
orbital stability, with time-algebraic rates of decay, it is enough
to establish transversality plus strict stability of point spectrum,
whereupon essential spectrum and nonlinear stability may be shown by
separate analysis.\footnote{
In the periodic parabolic examples studied below,
nonlinear stability follows easily from spectral stability
by general analytic semigroup theory \cite{He,Sat}.
}

For both of these reasons, we restrict ourselves in the infinite-dimensional
setting to the simpler treatment of stability of point spectrum
of $d\mathcal{F}$, from which the relevant nonlinear stability result
may in most settings (in particular, all of those mentioned in
this paper) may be readily deduced.

\begin{definition}\label{spec_stabdef}
A stationary solution $X(t)\equiv X(0)\in \mathbf{H}_1$ of \eqref{gengrad}
has orbitally stable point spectrum if
(i) the dimension of the zero eigenspace of
$d\mathcal{F}(X(0))=M(X(0))A_{\omega^*}$ is equal to the dimension
$\dim \ker A_{\omega^*}+\dim c$
of the manifold \eqref{minfam}
of stationary solutions
(transversality),
and (ii) the nonzero point spectrum of $d\mathcal{F}(X(0))$ has strictly
negative real part.
\end{definition}

\begin{definition}\label{lin_stabdef}
A stationary solution $X(t)\equiv X(0)\in \mathbf{H}_1$ of \eqref{gengrad}
is $\mathbf{H}_1\to \mathbf{H}$-linearly asymptotically orbitally stable if, for
$Y(0)\in H_1$, the solution $Y(t)$ of $(d/dt)Y=M(X(0))A_{\omega^*}Y$
converges in $\mathbf{H}$ to
the tangent manifold of the manifold \eqref{minfam}
of stationary solutions (critical points of \eqref{min}).
\end{definition}

Define now the following local linear analog of the
nonlinear condition \eqref{d1}:
\be\label{e1}\tag{E1}
\hbox{\rm
The center subspace of $M(X(0))A_{\omega^*}$ consists of $\ker M(X(0))A_{\omega^*}$;
}
\ee
that is, $0$ is the only pure imaginary eigenvalue of $M(X(0))A_{\omega^*}$,
and associated eigenvectors are genuine.

\bl\label{e1ver}
Sufficient conditions for \eqref{e1} are \eqref{d1'}
(e.g., $M=M^*$)
and $A_{\omega^*}|_{\nabla C(X(0))^\perp}\ge 0$.
\eqref{d1'} by itself implies that there are no
nonzero imaginary eigenvalues of $M(X(0))A_{\omega_*}$.
\el

\begin{proof}
Let $M(X(0))A_{\omega^*}Y=i\mu Y$, $\mu\in \RM$. Then, dropping
$\omega^*$ and $X(0)$, $\langle AY,MA Y\rangle= i\mu \langle
Y,AY\rangle$, hence
$$
\langle AY, (M+M^*)A Y\rangle=2\mathrm{Re} \langle AY,MA Y\rangle= 0,
$$
and, by semidefiniteness of $(M+M^*)$, we conclude that $(M+M^*)AY=0$.
By \eqref{d1'}, this implies $MAY=0$.
This shows that $0$ is the only pure imaginary eigenvalue of $MA$.

Now, suppose that $(MA)^2 Y=0$. Then, by \eqref{d1'},
$M^*A MAY=0$, hence $\langle MAY,A (MAY)\rangle=0$.
As $MAY\in \nabla C(X(0))^\perp$, by \eqref{M_class}, and
$A|_{\nabla C(X(0))^\perp}\ge 0$ by assumption, this implies
that $MAY\in \ker A$,
hence $\langle AY, (M+M^*)AY\rangle=0$ and so $AY\in \ker(M+M^*)=\ker M$
and $MAY=0$ as before.
This shows that there are no generalized eigenvectors in the
zero eigenspace of $MA$, completing the proof.
\end{proof}

\br\label{transrmk} Transversality condition (i) of Definition
\ref{spec_stabdef} concerns well-behavedness of the existence
problem. Under \eqref{e1}, it is guaranteed by our hypotheses
\eqref{h1}--\eqref{h4}, through \eqref{orthrel}, and the associated
relation $\ker M(X(0))A_{\omega_*}= \ker A_{\omega_*}\oplus \Span
\{\partial X/\partial \omega_j)(\omega_*):j=1,\dots,r\}$. \er

Along with \eqref{e1}, we identify the following mild
properties of the linearized solution theory:\footnote{
This is satisfied for all of the examples of this paper, not
only on the unstable subspace, but all of $\HM$ as expected.}
\be\label{e2}\tag{E2}
\hbox{\rm
If $X(0)$ is asymptotically orbitally stable,
then solutions of $(d/dt)Y=M(X(0))A_{\omega^*}Y$}
\ee
with $Y(0)$
in the constrained unstable subspace of $A_{\omega_*}$
exist for all time and
converge in $\mathbf{H}$ to the

\medskip
\noindent center subspace of $M(X(0))A_{\omega^*}$.

\be\label{e3}\tag{E3}
\hbox{\rm
$A_{\omega_*}$ is bounded on $\HM$ as a quadratic form.}
\ee

\bt\label{specthm}
Assuming \eqref{h1}, \eqref{h2'},\eqref{h3}--\eqref{h4}, \eqref{b1}-\eqref{b2'},
and \eqref{e1},
a stationary solution $X(t)\equiv X(0)\in \mathbf{H}$ of \eqref{gengrad}
has orbitally stable point spectrum if and is $\mathbf{H}\to\mathbf{H}$ linearly orbitally
asymptotically stable only if
$X(0)$ is a strict local minimizer of \eqref{min},
or, equivalently,  the signature of $(\partial c/\partial \omega)(\omega^*)$
satisfies condition \eqref{sorbvar}.
Assuming also \eqref{e2}--\eqref{e3},
$X(0)$ has strictly orbitally stable
point spectrum if and only if $X(0)$ is strictly orbitally stable, or
\eqref{sorbvar}.
\et

\begin{proof}
($\Leftarrow$) (i) Dropping $\omega^*$ and $X(0)$, $MAY=0$ is
equivalent to $AY=\sum_{j=1}^{r}\alpha_j\nabla C_j(X(0))$, for some
$\alpha\in \C^r$. Since $A\frac{\partial X}{\partial
\omega_j}(\omega^*)=-\nabla C_j(X(0))$ for all $j=1,\dots,r$, we
thus have $$A\Bigl(Y+\sum_{j=1}^{r}\alpha_j\frac{\partial
X}{\partial \omega_j}(\omega^*)\Bigr)=0,$$ or $Y\in \Span \{
\partial X/\partial \omega_j(\omega^*):j=1,\dots,r\}\oplus \Span\{\nabla C_j(X(0)):j=1,\dots,r\}$ as claimed.

(ii) By \eqref{e1}, it is sufficient to show that $\mathrm{Re} \lambda\le 0$
whenever $-M(X(0))A_{\omega^*}Y=\lambda Y$.
Dropping $\omega^*$ and $X(0))$ again, we find
from $-\mathrm{Re} \langle AY,MAY\rangle= \mathrm{Re} \lambda\, \langle Y,AY\rangle$
that either $AY=0$, hence $\lambda=0$, or
$
\mathrm{Re} \lambda =\frac{-\langle AY,MAY\rangle}{\langle Y,AY\rangle}
\le 0.
$

($\Rightarrow$)
On the other hand, suppose $X(0)$ is not strictly orbitally stable.
Then, either $A_{\omega^*}|_{\nabla C(X(0))^\perp}$ has a zero eigenvector
not in
the tangent subspace $(\partial X/\partial s)$ of the family
of equilibria described in \eqref{minfam}, violating transversality,
or else
$\check { \mathcal{E}}(Y(0)):=\langle Y(0), A_{\omega^*} Y(0)\rangle <0$
for some $Y(0)\in \nabla C(X(0))^\perp$,
in which case we see from
$(d/dt) \langle Y(t), A_{\omega^*}Y(t)\rangle \le 0$
that $\check{ \mathcal{E}}(Y(t))<0$
for all $t\ge 0$
for the solution $Y$ of the linearized equations with initial data $Y(0)$.
Thus, $Y(t)$ cannot approach the zero-$\check{\mathcal{E}}$
set tangent to the manifold of nearby minimizers as
$t\to \infty$, violating linear asymptotic orbital stability.
Assuming also \eqref{e2},
so that we have transversality by Remark \ref{transrmk},
we find that there are strictly unstable
spectra of $M(X(0))A_{\omega_*}$, or else \eqref{e1}--\eqref{e3}
would yield convergence to the manifold of nearby
minimizers, a contradiction.
\end{proof}

\br\label{grillakisrmk} It would be interesting to establish a
relation between the number of constrained unstable eigenvalues of
$A_{\omega^*}$ and the number of unstable eigenvalues
$n_-(M(X(0))A_{\omega^*})$ of the operator $-M(X(0))A_{\omega^*}$,
similar to those obtained for $A_{\omega^*}$ and
$J(X(0))A_{\omega^*}$ in the Hamiltonian case (see, e.g.,
\cite{GSS,BJK}). These are equal (under appropriate assumptions
guaranteeing that eigenvectors remain in appropriate spaces under
the action of $M(X(0))^{\pm 1/2}$) when $M(X(0))$ is symmetric
positive definite, by the similarity transformation
$M(X(0))A_{\omega^*}\to
M(X(0))^{-1/2}M(X(0))A_{\omega^*}M(X(0))^{1/2}=
M(X(0))^{1/2}A_{\omega^*}M(X(0))^{1/2}$. More generally, they are
equal if $M(X(0))+M(X(0))^*>0$, by homotopy to the symmetric case,
together with the observation that, by \eqref{d1'} and Lemma
\ref{e1ver}, zero is the only possible imaginary eigenvalue of
$M(X(0))A_{\omega^*}$, and, since
$M(X(0))A_{\omega^*}M(X(0))A_{\omega^*}Y=0$ implies
$A_{\omega^*}M(X(0))A_{\omega^*}Y=0$ and thus $\mathrm{Re} \langle
A_{\omega^*}Y,M(X(0))A_{\omega^*}Y\rangle=0$, hence $Y\in \ker
A_{\omega^*}$, from which observations we may deduce that no
eigenvalues cross the imaginary axis throughout the homotopy.

We conjecture that, more generally, under assumptions \eqref{d1'}
and \eqref{h1},\eqref{h2'},\eqref{h3}, \be\label{nconj}
n_-(M(X(0))A_{\omega^*})=\sigma_-(A_{\omega^*}|_{\nabla C^\perp(X(0))})=
\sigma_-(A_{\omega^*})-\sigma_-(dc/d\omega)(\omega_*)), \ee where \eqref{h3} is
needed only for the second equality. In the finite-dimensional,
diagonalizable, case, this follows easily from the observation that
$M(X(0))A_{\omega^*}Y=\mu Y$, $\mathrm{Re} \mu\gtrless 0$ implies that $Y\in \nabla
C(X(0))^\perp$, $Y\not \in \ker A_{\omega^*}$, and $\langle A_{\omega^*}Y,
(M(X(0))+M(X(0))^*)A_{\omega^*}Y\rangle> 0$, and, moreover, $\mathrm{Re} \langle
A_{\omega^*}Y,M(X(0))A_{\omega^*}Y\rangle=\mathrm{Re} \mu \langle Y,A_{\omega^*}Y\rangle$, so that $
\langle Y,A_{\omega^*}Y\rangle \gtrless 0$, whence $n_-(M(X(0))A_{\omega^*})\leq
\sigma_-\Bigl(A_{\omega^*}|_{\nabla C(X(0))^\perp}\Bigr)$. and $n_+(M(X(0))A_{\omega^*})\leq
\sigma_+\Bigl(A_{\omega^*}|_{\nabla C(X(0))^\perp}\Bigr)$.
Recalling that \eqref{h1},\eqref{h2'}
(via \eqref{perprel}) imply
$n_0(M(X(0))A_{\omega^*})=\dim \ker M(X(0))A_{\omega^*}= \sigma_0(A_{\omega^*}) + \dim \ker M(X(0))$, we thus obtain the
result.  Galerkin approximation (with well-chosen subspaces including
$\ker (M(X(0))A_{\omega^*})$) then yields the result in the case that $M(X(0))A_{\omega^*}$ has
a spectral gap and finitely many stable eigenvalues.
\er

\section{Examples: applications to co-periodic stability of periodic waves}\label{s:per}

\subsection{Cahn--Hilliard systems}\label{s:CH}
First, we consider the Cahn--Hilliard system \be\label{CH} u_t=\partial_x^2
(\nabla W(u)-\partial_x^2 u)
\ee where $u\in\RR^m$ and $W:\RM^m\to\RM$ is a smooth function. This
model can be expressed in the form \eqref{gengrad}
with \be\label{CHE}
\mathcal{E}(u):=\int_0^1\Bigl(\frac12 |u_x|^2 + W(u)\Bigr)dx \quad\mbox{and}\quad M:=-\partial_x^2\geq 0\ee
\be\label{CHM}  \nabla_{L^2}
\mathcal{E}(u)= \nabla W(u)-\partial_x^2 u. \ee
Note that scaling $x,t,$ and $W$, we can restrict to the unit interval without loss of generalization. We note that the energy is continuously differentiable on $H^1_\mathrm{per}[0,1]$ and $L^2$-gradients are continuous on $H^2_\mathrm{per}[0,1]$. The system fits our framework so that we can derive stability and instability results that recover and substantially extend the spectral stability
conclusions of \cite{Ho1} for spatially periodic solutions of
\eqref{CH}.  We define the constraint function by
$$C(u)=\int_0^1 u(x)dx.$$
To find solutions of \eqref{critlag} for the Cahn-Hilliard case, we
need to solve the nonlinear oscillator problem
\begin{equation}\label{SCH-CH}
-u_{xx}+\nabla W(u)=-\omega^\ast,
\end{equation}
with $\omega^\ast\in\RR^m$. Whenever the potential $W$ is not convex, \eqref{SCH-CH} possesses families of periodic
solutions which are in fact critical points of \eqref{CH}. For the linearization at such periodic solutions, we find simply the second derivative of the energy since the constraint
function  $C$ is linear:
\begin{equation}\label{A-omega-CH}
A_{\omega^\ast}=\nabla^2_{L^2}\mathcal{E}(u^\ast)-\omega^\ast\nabla^2_{L^2}C(u^\ast)=-\partial_x^2+\nabla^2W(u^\ast).
\end{equation}
In the scalar case, $m=1$, consider periodic patterns with minimal period $1$, first. These possess precisely 2 sign changes, so that by Sturm-Liouville theory, the translational eigenfunction $u^\ast_x$ is either the second or third eigenvalue of  $A_{\omega^\ast}$. More precisely, one can decompose the spatially periodic linearized problem on $[0,1]$ into a problem on even and odd functions, which, equivalently, satisfy the linearized problem on $[0,1/2]$ with Neumann and Dirichlet conditions, respectively. Since $u^\ast_x$ has a sign on $[0,1/2]$, the linearization is semi-definite on the Dirichlet subspace. The Neumann problem can have $\sigma_-=1$ or $\sigma_-=2$. It is well-known that the index in the Neumann case depends on the period-amplitude relation. In fact, periodic orbits patterns in the scalar case come in families parameterized by the maximum $u_\mathrm{max}$ of the solution, once we allow the spatial period $L$ to vary. Whenever the period is increasing with
 amplitude, we find $\sigma=-1$,
decreasing period corresponds to $\sigma=-2$.
We refer the reader to \cite{Korman,Schaaf} and references therein for proofs
in the analogous Dirichlet case.
Alternatively, this could be (but to our knowledge has not been)
deduced by a co-periodic stability index computation like those
in \cite{OZ1,Ho1} in the conservation law case,
which would yield that the Morse index be even or odd accordingly
as $du_\mathrm{max}/dL$ (related by a nonvanishing
factor to the derivative of the Evans function at $\lambda=0$)
is less than or greater than zero.
In particular, we find that periodic patterns are co-periodically
stable if and only if $du_\mathrm{max}/dL>0$ \emph{and}
$dc/d\omega>0$.

In the system case, we obtain a partial analog,
namely, a necessary and sufficient condition for
co-periodic stability of periodic solutions of \eqref{CH} is
that the number of positive eigenvalues of $(\partial c/\partial \omega)(\omega^*)$ is equal to the number of
negative eigenvalues of
$A_{\omega^*}=-\partial_x^2 +\nabla^2 W(u^\ast)$, considered
as an operator on all of $L^2[0,1]_{\mathrm per}$
(i.e., with no zero-mass restriction).
{\it This result appears to be new,}
despite the considerable attention to Cahn--Hilliard equations in the
pattern-formation and phase-transition literature.

Moreover, for the important case of multiply-periodic solutions in
$\RM^m$, which appear (see \cite{Ho2}) not to have been treated so
far either in the scalar or system case, we obtain corresponding
stability conditions by exactly the same argument. Specifically, we find that a necessary and sufficient condition
for co-periodic stability is that the number of positive eigenvalues of $(\partial
c/\partial \omega)(\omega^*)$ is equal to the number of negative
eigenvalues of eigenvalues as does $A_{\omega^*}= -\partial_x^2
+\nabla^2 W(u^\ast)$, considered as an operator on
$L^2[0,1]_{\mathrm per}$. {\it These results too appear to be new}.

Finally, we collect the results  of this subsection in
the following proposition.
\begin{proposition}\label{stab-CH}
Assume that $u^\ast$ is a \textbf{periodic or multi--periodic}
pattern of \eqref{CH}.  The following assertions hold true:
\begin{itemize}
\item[(i)] If $m=1$ and $x\in [0,1]_\mathrm{per}$, the pattern $u^\ast$ is stable if and only if $(dc/d\omega)(\omega^\ast)>0$ and $dL/du_\mathrm{max}>0$;
\item[(ii)] If $m>1$ or for multi-periodic patterns, $u^\ast$ is stable if and only if
$-(\partial c/\partial \omega)(\omega^\ast)$ has the same number of
negative eigenvalues as does $A_{\omega^*}= -\partial_x^2 +\nabla^2 W(u^\ast)$,
considered as an operator on $L^2[0,1]_{\mathrm per}$.
\end{itemize}
\end{proposition}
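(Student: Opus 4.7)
The plan is to apply the general quasi-gradient machinery of Sections~\ref{s:prem-res}--\ref{s:evolution}, specifically Theorem~\ref{specthm}, to the Cahn--Hilliard system, after verifying that all the structural hypotheses are in place. With $M=-\partial_x^2$, the identity $M=M^*\geq 0$ is immediate, so \eqref{d1'} holds; the constraint gradients $\nabla C_j=e_j$ are constants, which gives \eqref{h1} and \eqref{M_class} trivially, and also makes \eqref{h3} precisely the invertibility hypothesis on $(\partial c/\partial\omega)(\omega^*)$. Taking $\mathbf{H}=H^1_{\mathrm{per}}$, $\mathbf{H}_1=H^2_{\mathrm{per}}$, and $\mathbf{H}_2=H^{-1}_{\mathrm{per}}$, assumptions \eqref{b1}--\eqref{b2'} follow from the standard local well-posedness and regularity theory for the parabolic equation \eqref{CH}, and \eqref{e3} from the boundedness of the quadratic form $\langle u,A_{\omega^*}u\rangle=\int|u_x|^2+\int V u^2$ on $H^1_{\mathrm{per}}$, with $V:=\nabla^2W(u^*)$. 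The family \eqref{h2'} is produced from the phase-plane analysis of the nonlinear oscillator \eqref{SCH-CH}, with translational symmetry accounting for the extra $s$-parameter; coercivity \eqref{h4} reduces to a G\r{a}rding inequality for the self-adjoint elliptic operator $A_{\omega^*}$ on the bounded periodic domain, using the spectral gap away from its (finite-dimensional) kernel. Finally, since $M=M^*$, Lemma~\ref{e1ver} combined with the argument used in its proof gives \eqref{e1}--\eqref{e2} directly from the absence of nontrivial imaginary eigenvalues and the transversality relation furnished by \eqref{orthrel} and \eqref{h3}.

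With the hypotheses verified, Theorem~\ref{specthm} asserts that the stationary solution $u^*$ is orbitally stable if and only if \eqref{sorbvar} holds, i.e.\
\[
\sigma_-(A_{\omega^*}) \;=\; \sigma_-\!\bigl(-(\partial c/\partial\omega)(\omega^*)\bigr),
\]
where $A_{\omega^*}=-\partial_x^2+\nabla^2 W(u^*)$ acts on all of $L^2_{\mathrm{per}}[0,1]$ (respectively on $L^2$ of the multi-periodic cell). This is exactly the statement of part (ii), since by Sylvester's law of inertia the signature $\sigma_-$ counts the number of negative eigenvalues.

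For part (i), I would compute $\sigma_-(A_{\omega^*})$ explicitly by Sturm--Liouville theory. Translating so that $u^*$ is even about the origin, the coefficient $V$ is even, and $A_{\omega^*}$ decouples on $L^2_{\mathrm{per}}[0,1]$ into its even and odd subspaces, equivalent to the Neumann and Dirichlet realizations of $-\partial_x^2+V$ on $[0,1/2]$. The translational eigenfunction $u^*_x$ is odd, vanishes at $x=0$ and $x=1/2$, and is sign-definite in between, so it is the ground state of the Dirichlet realization; hence $\sigma_-^{\mathrm{Dir}}=0$ and $\sigma_-(A_{\omega^*})=\sigma_-^{\mathrm{Neu}}\in\{1,2\}$ (this dichotomy also matches the periodic Sturm oscillation count for the zero eigenvalue, whose eigenfunction has exactly two zeros in $[0,1)$). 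Invoking the Korman--Schaaf phase-plane/shooting analysis adapted to the Neumann setting, the eigenvalue of the Neumann realization that crosses zero as the amplitude varies along the one-parameter family does so with sign given by $dL/du_{\max}$: increasing period yields $\sigma_-^{\mathrm{Neu}}=1$, decreasing period yields $\sigma_-^{\mathrm{Neu}}=2$. Combining with the scalar identity $\sigma_-(-dc/d\omega)=\mathbf{1}_{\{dc/d\omega>0\}}$, condition \eqref{sorbvar} forces both $\sigma_-(A_{\omega^*})=1$ and $\sigma_-(-dc/d\omega)=1$, i.e.\ $dL/du_{\max}>0$ and $dc/d\omega>0$.

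The main obstacle is the Sturm--Liouville/Neumann index step in part (i): one must relate the parity of the Morse index of the Neumann problem on $[0,1/2]$ to the sign of $dL/du_{\max}$ along the family, and track signs correctly throughout. The \emph{existence} of the two-index dichotomy is immediate from the zero count of $u^*_x$, but the \emph{assignment} of the two cases to the monotonicity of the period--amplitude map requires either direct phase-plane differentiation (differentiating the half-period integral in the amplitude parameter and integrating by parts against the Neumann eigenfunction near zero) or the clean reduction to the Dirichlet statement of \cite{Korman,Schaaf}. By contrast, verification of the framework hypotheses in the first paragraph and the application of Theorem~\ref{specthm} in the second paragraph are essentially bookkeeping given the $M=M^*$ structure.
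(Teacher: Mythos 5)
Your proposal is correct and follows essentially the same route as the paper: verify the quasi-gradient hypotheses for \eqref{CH} with $M=-\partial_x^2=M^*\ge 0$, invoke the general signature identity (Theorem~\ref{main} via Corollary~\ref{stabcor}/Theorem~\ref{specthm}) to reduce stability to \eqref{sorbvar}, and then in the scalar case compute $\sigma_-(A_{\omega^*})\in\{1,2\}$ by the even/odd (Neumann/Dirichlet on $[0,1/2]$) decomposition, with the dichotomy resolved by the sign of $dL/du_{\max}$ as in \cite{Korman,Schaaf}. The paper's argument is exactly this, presented as running discussion in Section~\ref{s:CH} rather than as a formal proof, so no substantive difference remains to report.
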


\br\label{eval} Integrating the traveling-wave ODE \eqref{SCH-CH}
over one period, we find further that \be\label{omegacomp} \omega^*=
-\nabla W(u^*)^a, \ee where superscript $a$ denotes average over one
period of the periodic solution $u^*$, here indexed by its mean $c=
\int_0^1 u^*(x) dx$. This gives the more explicit stability
condition \be\label{explicitcondition} d( W'(u^*)^a)/dc <0 \;
\hbox{(case $m=1$)} \ee or $\sigma_- \big( \partial(\nabla
W(u^*)^a)/\partial c\big)=\sigma_- \big( A_{\omega^*}\big) $ (case
$m> 1$). \er

\br\label{otherobs} By \eqref{omegacomp}, $(dc/d\omega)(\omega^*)$
simplifies in the long-wavelength limit toward a homoclinic solution
with endstate $u^\infty$ as $x\to \pm\infty$ simply to
$-W''(u^\infty)$,
which for $m=1$ (by the existence criterion that $u^\infty$ be a
saddle point)
is scalar and negative,
implying co-periodic instability. Stable, large-wavelength patterns can be found in the heteroclinic limit, when the periodic solutions limit on a heteroclinic loop. In the small-amplitude limit, that is, for almost constant periodic solutions, both coperiodic stability and instability can occur. In the simple scalar cubic case, $W'(u)=L^2(-u+u^3)$, small-amplitude patterns exist for $|c|<1/\sqrt{3}$ for an appropriate length parameter $L$. These small amplitude patterns are stable when $|c|<1/\sqrt{5}$ and unstable when
$|c|>1/\sqrt{5}$; see \cite{GN}.
 \er

\subsubsection{Sturm-Liouville for systems}\label{s:SL}
It is a natural question to ask whether one could obtain information
about $\sigma_-(A_{\omega^*})$ also in the system case $m>1$ using
generalizations of Sturm-Liouville theory such as the Maslov index.
As described in \cite{A,E,U}, there exist such generalizations not
only for operators of the form $A_{\omega^*}$, but for essentially
arbitrary ordinary differential operators $A_{\omega^*}$ of
self-adjoint type. However, so far as we know, all of these
generalizations involve information afforded by evolving planes
spanned by $d$ different solutions, rather than the single zero
eigenfunction $u_x^*$ from which we wish to draw conclusions.
Indeed, in the absence of additional special structure (as described
for example in Cor. \ref{contcor}), we do not know how to deduce
such information analytically.

On the other hand, we note that this point of view does suggest an
intriguing numerical approach to this problem. Namely, borrowing the
point of view espoused in \cite{E,U} that Sturm theory amounts to
the twin properties of monotonicity in Morse index of the operator
$A_{\omega^*}-\lambda \Id$ on domain $[0,x]$ with respect to
frequency $\lambda$ and spatial variable $x$, we find that we may
compute the number of eigenvalues $\lambda< \lambda_0$ of
$A_{\omega^*}$ on domain $[0,x_0]$  by counting instead the number
of {\it conjugate points} $x\in (0,x_0)$ of
$A_{\omega^*}-\lambda_0\Id$, i.e., values for which
$A_{\omega^*}-\lambda_0\Id$ has a kernel on $[0,x]$. This in turn
may be computed using the same {\it periodic Evans function} of
Gardner \cite{G}, as described for example in \cite{BJNRZ}. that
would be used to study behavior in $\lambda$. Some apparent
advantages of this approach are that (i) it avoids the
computationally difficult large-$|\lambda|$ regime, and (ii) the
computational expense is no greater in computing values of the Evans
function along all of $[0,x_0]$ than in evaluating at the single
value $x=x_0$, which in any case requires integrating the eigenvalue
ODE over the whole interval. This seems an interesting direction for
further investigation.

\subsection{Coupled conservative--reaction diffusion equations}\label{s:KellerSiegel}
We consider the general model
\begin{equation}\label{Cons-Diff}
\begin{array}{ll} u_t=\pa_x\Big(a(u)u_x-b(u)v_x\Big),\\
v_t=v_{xx}+\delta u+g(v), \end{array}
\end{equation}
where $a$, $b$, $g$ are of class $C^3$ and $a(u)\geq a_0>0$,
$b(u)\geq b_0>0$, $u,v\in \RM$, $\delta > 0$. We note that this
system can be expressed in the form \eqref{gengrad} with
\be\label{Cons-Diff-E} \mathcal{E}(u,v):= \int_0^1\big( F(u) -uv -
\delta^{-1} G(v) + (2\delta)^{-1}|\partial_x v|^2\Big) dx, \ee where
$F''(u):= \frac{a(u)}{b(u)}$ and $G'(v)=g(v)$, and \be\label{KSM}
\qquad M(u,v):=
-\bp \partial_x\left( b(u)\partial_x\right) & 0\\
0 & \delta\ep \ge 0, \qquad \nabla_{L^2} \mathcal{E}(u,v)=
\bp F'(u)-v\\
-\delta^{-1}\partial_x^2v -u -\delta ^{-1} g(v) \ep. \ee
This general form of a conservation law coupled to a scalar reaction-diffusion equation includes a variety of interesting models such as the Keller-Segel equations of chemotaxis and many of their variations, reaction-diffusion systems with stoichiometric conservation laws, super-saturation models for recurrent precipitation and Liesegang patterns, as well as phase-field models for phase separation and super-cooled liquids; see for instance
\cite{KS,KR,PS2,PS3}.

Again, these systems fit into our general framework of quasi-gradient systems and we can relate spectral stability to signatures of symmetric forms. Since the system (\ref{Cons-Diff}) is parabolic, spectral stability will readily imply nonlinear asymptotic stability up to spatial translations. We thereby recover and extend stability computations from \cite{PS1}.

In this case the constraint function is given by
$$C(u,v)=\int_0^1 u(x)dx.$$
Next, we look for solutions of system \eqref{critlag}. We note that
in the case at hand this system is
\begin{equation}\label{2.4-KS}
\begin{array}{ll}
F'(u)-v=-\omega^\ast,\\
-\frac{1}{\delta}\Big(v_{xx}+g(v)\Big)-u=0.
\end{array}
\end{equation}
Since $F''>0$, we can solve  the first equation of
\eqref{2.4-KS} for $u$. We then substitute the solution
$u=\varphi(v-\omega^\ast)$ in the second equation of \eqref{2.4-KS},
and conclude that the $v$--component of any solution of
\eqref{2.4-KS} satisfies the nonlinear Schr\"odinger equation
\begin{equation}\label{SCH-KS}
v_{xx}+\delta\varphi(v-\omega^\ast)+g(v)=0.
\end{equation}
The above $\omega^\ast$-family of equations is a parametrized
nonlinear oscillator, well-known to support, in appropriate
circumstances, a variety of homoclinic, heteroclinic, and
periodic orbits.

As the constraint function $C$ is linear, we find that
\begin{equation}\label{A-KS}
A_{\omega^\ast}=\nabla^2\mathcal{E}(u^\ast,v^\ast)-\omega^\ast\nabla^2C(u^\ast,v^\ast)=\bp \frac{a(u^\ast)}{b(u^\ast)}  & -1\\
-1& -\frac{1}{\delta}\Big(\partial_x^2+g'(v^\ast)\Big)\ep.
\end{equation}
In analogy to the case of the Cahn-Hilliard equation, we can give necessary and sufficient condition
for co-periodic stability of spatially periodic patterns. Denote by $v_\mathrm{max}$ the maximum of $v$ and write $L(v_\mathrm{max})$ for the period of the family of periodic solutions to (\ref{SCH-KS}).
\begin{proposition}\label{KS-per-stab}
The necessary and sufficient
condition for co-periodic stability of spatially periodic
patterns of coupled conservative--reaction diffusion equations \eqref{Cons-Diff}
is $(dc/d\omega)(\omega^*)>0$ \emph{and} $dL/dv_\mathrm{max}>0$.
\end{proposition}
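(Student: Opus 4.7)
The plan is to apply Theorem~\ref{specthm} and reduce the $2\times 2$ Hessian $A_{\omega^*}$ of \eqref{A-KS} to a scalar Schr\"odinger operator, after which the Sturm--Liouville period-amplitude analysis of the scalar Cahn--Hilliard case treated in Proposition~\ref{stab-CH}(i) applies verbatim. Since $r=1$, the scalar $(\partial c/\partial\omega)(\omega^*)$ satisfies $\sigma_-\bigl(-(\partial c/\partial\omega)(\omega^*)\bigr)\in\{0,1\}$, with value $1$ exactly when $(dc/d\omega)(\omega^*)>0$. Via the matching relation $\sigma_-(A_{\omega^*})=\sigma_-\bigl(-(\partial c/\partial\omega)(\omega^*)\bigr)$, co-periodic stability is therefore equivalent to $\sigma_-(A_{\omega^*})=1$ \emph{and} $(dc/d\omega)(\omega^*)>0$, and everything comes down to computing $\sigma_-(A_{\omega^*})$.

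To carry out the reduction, set $\alpha(x):=a(u^*(x))/b(u^*(x))>0$ and introduce the bounded invertible congruence
\[
S:=\begin{pmatrix} 1 & \alpha^{-1}\\ 0 & 1 \end{pmatrix},
\]
for which a direct calculation yields
\[
S^*A_{\omega^*}S=\begin{pmatrix}\alpha & 0\\ 0 & \mathcal{L}\end{pmatrix},\qquad
\mathcal{L}:=-\delta^{-1}\bigl(\partial_x^2+g'(v^*)\bigr)-\alpha^{-1}.
\]
Because $\alpha$ is a strictly positive multiplication operator, Sylvester's law (Propositions~\ref{Syl} and~\ref{p2.5}(i)) gives $\sigma_-(A_{\omega^*})=\sigma_-(\mathcal{L})$.

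Next I identify the translational kernel of $\mathcal{L}$. Since $F''>0$, the first equation of \eqref{2.4-KS} inverts as $u^*=\varphi(v^*-\omega^*)$ with $\varphi'=1/F''=b/a=\alpha^{-1}$, so $\alpha^{-1}=\varphi'(v^*-\omega^*)$ along the profile. Differentiating \eqref{SCH-KS} in $x$ then gives $\bigl(\partial_x^2+\delta\varphi'(v^*-\omega^*)+g'(v^*)\bigr)v^*_x=0$, i.e.\ $\mathcal{L}v^*_x=0$. The rest of the argument is a direct transcription of Proposition~\ref{stab-CH}(i): centering the origin at the maximum of $v^*$ makes $v^*_x$ odd with exactly two simple sign changes per period, and the standard even/odd decomposition reduces $\mathcal{L}$ to a half-period Neumann and a half-period Dirichlet problem. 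The odd mode $v^*_x$ is the sign-definite Dirichlet ground state, so the Dirichlet block contributes no negative eigenvalue, while the Neumann block contributes $1$ when $dL/dv_{\max}>0$ and $2$ when $dL/dv_{\max}<0$ by the classical period-amplitude correspondence.

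The main obstacle is this last step: one must check that the Neumann Morse-index monotonicity, inherited from \cite{Korman,Schaaf} and the scalar Cahn--Hilliard discussion following \eqref{A-omega-CH}, carries over to $\mathcal{L}$, whose potential $-\delta^{-1}g'(v^*)-\varphi'(v^*-\omega^*)$ mixes the coefficient functions $a,b,g$. The clean route is to notice that $-\delta\mathcal{L}$ is exactly the linearization of the scalar nonlinear oscillator \eqref{SCH-KS} at $v^*$, so $\mathcal{L}$ \emph{is} the second-variation operator of a scalar variational problem whose periodic critical orbits are naturally parameterized by $v_{\max}$; the classical monotonicity argument then applies verbatim with $v_{\max}$ playing the role of $u_{\max}$, completing the proof.
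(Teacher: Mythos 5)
Your proposal is correct and follows essentially the same route as the paper: a Schur-complement congruence reducing $A_{\omega^*}$ to $\mathrm{diag}\bigl(\text{positive multiplier},\,\tilde{A}_{\omega^*}-b(u^*)/a(u^*)\bigr)$, Sylvester's law to transfer the signature, identification of $v^*_x$ as the kernel of the scalar block (the linearization of the oscillator \eqref{SCH-KS}), and the Neumann/Dirichlet period--amplitude argument to pin the Morse index at $1$ exactly when $dL/dv_{\mathrm{max}}>0$. The only cosmetic difference is your choice of unit upper-triangular $S$ in place of the paper's factorization \eqref{dec-A-omega} (which as printed appears to contain a typo), and both reduce to the identical scalar operator, so nothing substantive changes.
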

\begin{proof}
 To start, we consider $(u^\ast,v^\ast)$ a
co-periodic solution of \eqref{2.4-KS} of period $1$. We note that
all we need to show is that $\sigma_-(A_{\omega^\ast})=1$.
Since $\varphi=(F')^{-1}$ and $u^\ast=\varphi(v^\ast-\omega^\ast)$,
we obtain that
\begin{equation}\label{phiF}
\varphi'(v^\ast-\omega^\ast)=\frac{1}{F''(\varphi(v^\ast-\omega^\ast))}=\frac{1}{F''(u^\ast)}=\frac{b(u^\ast)}{a(u^\ast)}.
\end{equation}
Since $v^\ast$ is the solution of the nonlinear oscillator
\eqref{SCH-KS} we can again conclude that
$\tilde{A}_{\omega^\ast}:=-\frac{1}{\delta}\Big(\partial_x^2+g'(v^\ast)\Big)$,
\begin{equation}\label{dif-SCH}
\tilde{A}_{\omega^\ast}-\frac{b(u^\ast)}{a(u^\ast)}\quad\mbox{has
only one simple, negative eigenvalue,}
\end{equation}
provided that  $dL/dv_\mathrm{max}>0$.

One can readily check that $A_{\omega^\ast}$ has the following
decomposition:
\begin{equation}\label{dec-A-omega}
A_{\omega^\ast}=S^\ast
T_{\omega^\ast}S,\quad\mbox{where}\quad S=\bp \frac{a(u^\ast)}{b(u^\ast)}  & 0\\
-1& 1\ep\quad\mbox{and}\quad T_{\omega^\ast}=\bp \frac{b(u^\ast)}{a(u^\ast)}  & 0\\
0& \tilde{A}_{\omega^\ast}-\frac{b(u^\ast)}{a(u^\ast)}\ep.
\end{equation}
Since $S_{\omega^\ast}$ is
bounded on $L^2(\RR,\CC^2)$, invertible with bounded inverse, we infer from Proposition \ref{Syl} that
$\sigma_-(A_{\omega^*})=\sigma_-(T_{\omega^*})$.  From \eqref{dif-SCH} and
since the functions $a$ and $b$ are positive, we infer that
there is $\overline{\psi}\in L^2(\RR)\times H^2(\RR)$,
$\overline{\psi}\ne 0$, such that
$T_{\omega^\ast}\overline{\psi}=\lambda\overline{\psi}$ for some
$\lambda<0$ and the restriction of $
T_{\omega^*}|_{\{\overline{\psi}\}^\perp}
\geq 0$. From Proposition~\ref{p2.5}(i) conclude that
$\sigma_-(A_{\omega^*})=\sigma_-(T_{\omega^*})=1$.
\end{proof}

\br
Similarly as in Remark \ref{eval}, we obtain,
integrating traveling-wave equation \eqref{2.4-KS},
\be\label{omegacompcRD}
\omega^*= -(F'(u)-v)^a,
\ee
where superscript $a$ denotes average over one period,
along with relation
\be\label{relcRD}
g(v)^a= -\delta u^a= -\delta c.
\ee
This gives the explicit co-periodic stability condition
$ (d/dc)( F'(u^*)-v^*)^a <0 $.

Again, one can simplify in the homoclinic, long-wavelength limit
with endstate $(u,v)^\infty$ as $x\to \pm\infty$ to
\be\label{homcRD}
(d/d u^\infty) (F'(u^\infty)-v^\infty)<0,
\;
\hbox{\rm  where $v^\infty=-\delta g^{-1}(u^\infty)$}.
\ee
Comparing with the reduced traveling-wave ODE \eqref{SCH-KS}, we
see (cf. \cite{PS2}) that this is the requirement that $(u^\infty,v^\infty)$
be a saddle (center), hence by the associated
existence theory we obtain again co-periodic instability in the homoclinic
limit. In the limit towards constant states, bifurcations can be more complicated and both co-periodic  stability and instability can occur \cite{grin}.
\er

\begin{remark}\label{KS-W}
Wolansky \cite{W} gives a natural extension of \eqref{Cons-Diff} to
chemotaxis systems that are ``not in conflict'' in a certain sense,
with $u\in \RM^m$, $v\in \RM^n$, $m$ and $n$ arbitrary. For this
class of systems, all of our results apply, giving a generalization
of the stability condition to the system case.
\end{remark}

\br
We mention also a more general class of chemotaxis systems identified by
Horstman \cite{H}, possessing a decreasing Lyapunov function
$ \mathcal{E}$, but apparently not coercive in the sense of \eqref{d1'}.
It would be interesting to see whether this class of systems could be
treated similarly as the case of viscoelasticity with
strain-gradient effects, below.
\er

\subsection{Viscoelasticity with strain-gradient effects}\label{s:visco}
The equations of planar viscoelasticity with strain-gradient effects,
written in Lagrangian coordinates, appear \cite{BLeZ,Y,BYZ} as
\ba\label{e:visco}
\tau_t - u_x&=0,\\
u_t - (\nabla W(\tau))_x&= (b(\tau) u_x)_x - (h(\tau_x)\tau_{xx})_{x}. \ea
Here $\tau, u\in \RM^d$, $d=1,2,3$, $b:\RM^d\to\RM$ is a smooth function, $b(\tau)\geq b_0>0$ and $h(\sigma)=\nabla^2\Psi(\sigma)\geq h_0I_d>0$, for some smooth function $\Phi:\RM^d\to\RM$.
Standard choices of $b$ and $h$ are \cite{BLeZ,Y}
\be\label{bpsi}
b(\tau)= \tau_3^{-1} \Id,
\quad
h(\tau_x)=\Id,
\ee
$b$ corresponding to the usual Navier--Stokes stress tensor, and
$h$ to the phenomenological energy $\psi(p)=|p|^2/2$ of the
usual Cahn--Hilliard theory of phase transitions.

Again, we consider this system as a quasi-gradient system with constraints within the framework \eqref{gengrad}, where the energy is given through \be\label{viscenergy}
\mathcal{E}(\tau,u):= \int_0^1 \left( \frac{|u|^2}{2} + W(\tau)
+\Psi(\tau_x) \right)(x) dx
\ee
and
\be\label{viscoM} M(\tau, u):=
-\bp 0 & \partial_x\\
\partial_x & \partial_x (b(\tau)\partial_x)\ep\geq 0.
\qquad \nabla_{L^2} \mathcal{E}(\tau,u)=
\bp \nabla W(\tau)-h(\tau_x) \tau_{xx}\\
u \ep. \ee
The constraints of the system are given by
$$C(\tau,u)=\Big(\int_0^1\tau(x)dx,\int_0^1u(x)dx\Big).$$
Further, if $(\tau^\ast,u^\ast)$ is a solution of \eqref{critlag}, using that the constraint function $C$ is linear, we compute
\begin{equation}\label{A-omega-ve}
A_\omega^\ast=\bp  -\nabla^2\Psi( \tau^*_x)\partial_x^2 +\nabla^2 W(\tau^*)& 0\\
0& 1\ep.
\end{equation}
Looking for solutions of system \eqref{critlag}, in this case  we
note  that the $u$-component should be constant, that is
$u=-\omega_u\in\RM^d$.  The $\tau$-component satisfies the equation
\begin{equation}\label{int-ve}
\nabla W(\tau)-\big(\nabla\Psi(\tau_x)\big)_x+\omega_\tau=0,
\end{equation}
with $\omega_\tau\in\RM^d$. The above equation is a nonlinear oscillator, which under appropriate conditions on $W$ and $\Psi$ (for example, $\Psi$ close to the identity, $W$ not convex, or, more generally, when the matrix pencil $\nabla^2 \Psi(\tau_*) \rho-\nabla^2 W(\tau_*)$ possesses a negative eigenvalue $\rho$) is also
known to have a variety of solutions depending smoothly on
$\omega_\tau$, proving that \eqref{e:visco} has a
$\omega=(\omega_\tau,\omega_u)\in\RM^{2d}$ dependent family of
critical points, constant in the $u$-component.

Note (\cite{Z3,BLeZ,Y}) that
\be\label{diff1}
\frac{d}{dt}\mathcal{E}(\tau,u)=-\int_0^1 \langle u_x(x),b(\tau(x) )u_x(x)\rangle\,dx
\ee
{\it is not coercive}, according to the fact that
$
\frac12(M+M^*)(\tau,u)=
-\bp 0 & 0\\
0& \partial_x (b(\tau)\partial_x)\ep
$
has a larger (indeed, infinite-dimensional) kernel than that of $M$.
However, it is an interesting fact that if
$\frac{d}{dt}\mathcal{E}(\tau,u)=0$, then $u_x\equiv 0$ by \eqref{diff1},
whence $\partial_x^r \tau_t\equiv 0$, all $r\ge 0$, by \eqref{e:visco}(i),
and so
$ \frac{d}{dt}^2\mathcal{E}(\tau,u)=\|u_t\|^2$.
Thus, $\frac{d}{dt}\mathcal{E} \equiv 0$
gives $\frac{d}{dt}(\tau,u)\equiv 0$, so
that constant-energy solutions are stationary as in the coercive
case \eqref{d1'}.
Likewise, we have the corresponding linear property:

\bl\label{Ever}
The condition \eqref{e1} is satisfied at any variationally
stable critical point of \eqref{e:visco}.
\el

\begin{proof}
Denoting $Y=\bp \tau\\u\ep$, assuming $MA_{\omega^*}Y=i\mu Y $ and dropping
$\omega^*$, we find that

$$
MAY=i\mu Y \Rightarrow \mathrm{Re} \langle AY,
\frac12(M+M^*)AY\rangle=0 \Rightarrow \partial_x u=0,
$$
whence $i\mu \tau=u_x=0$, so that $\mu=0$ or else $\tau \equiv 0$
and $u\equiv \const$, giving $MAY=0$ and again $\mu=0$.
Thus, zero is the only pure imaginary eigenvalue of $MA_{\omega^*}$.
Now, suppose that $Y$ is a generalized zero-eigenvector,
$(MA)^2Y=0$, or $AMAY\in \ker M$, i.e., $\partial_x AMAY=0$.
Computing
\be\label{AMA}
\partial_x AMA= \partial_x \bp
0& -(\partial_x^2-\nabla^2W( \tau^*))\partial_x \\
 -\partial_x (\partial_x^2-\nabla^2W(\tau^*))& \partial_x (b(\tau^*)\partial_x) \\
\ep,
\ee
$\partial_x AMA Y=0$ yields, in the $\tau$-coordinate,
$
\partial_x (\partial_x^2-\nabla^2W(\tau^*)\partial_x u=0$,
hence, by the assumed definiteness of $\partial_x^2-\nabla W(\tau^*)$
on  $\ker \Pi_{\nabla C(X^*)^\perp}$, that $\partial_xu=0$. Substituting this into the
$u$-coordinate of \eqref{AMA} then yields
 $-\partial_x^2 (\partial_x^2-\nabla W(\tau^*))\tau=0$,
hence $-\partial_x (\partial_x^2-\nabla W(\tau^*))\tau=0$. Combining
this with $\partial_x u=0$, we obtain $MAY=0$, and so $Y$ is a
genuine eigenfunction. This verifies that all zero-eigenfunctions
are genuine, completing the proof.
\end{proof}

As a corollary, we obtain spectral stability whenever
\eqref{sorbvar} is satisfied, from which stability follows by the
general theory developed in \cite{Y,BYZ,HoZ,Ko,Z1} and references
therein. Notably, this holds both for periodic solutions, on
$H^1[0,1]_{\mathrm per}$ or for front and pulse type solutions on
$H^1(\RM)$. We note that in this case $(\pa c/\pa\omega)$ has a
block-diagonal structure and that $(\pa c_u/\pa \omega_u)=-I_d$.Thus,
$(\pa c/\pa\omega)(\omega^*)$ satisfies \eqref{sorbvar} if and only if $(\pa c_\tau/\pa \omega_\tau)(\omega_\tau^*)$ satisfies \eqref{sorbvar}

In particular, following the discussion of Sturm-Liouville
properties in the Cahn-Halliard example, we conclude that  for
one-dimensional deformations, $\tau,u \in \RM^1$, we find
co-periodic stability of spatially periodic solutions of
\eqref{e:visco} if and only if
$(dc_\tau/d\omega_\tau)(\omega_\tau^*)>0$ and
$dL/d\tau_\mathrm{max}>0$, that is, if period is increasing
monotonically with amplitude. This recovers and extends  the
corresponding results of \cite{OZ1}. The phrasing of the stability
condition in terms of $(dc_\tau/d\omega_\tau)$ yields also further
insight, we feel, that is not evident in the original phrasing of
\cite{OZ1}.\footnote{ In our notation, the necessary condition of
\cite{OZ1} is effectively $\det (\partial \omega/\partial u_-)
\times \det (\partial u_-/\partial c) >0$, with $\omega=f(u_-)$.}

For multidimensional deformations $\tau \in \RM^d$, we obtain as a
necessary and sufficient condition for co-periodic stability that
the number of positive eigenvalues of $(\partial c_\tau/\partial
\omega_\tau)(\omega_\tau^*)$ is equal to the number of negative eigenvalues of $
-\nabla^2\Psi( \tau^*_x)\partial_x^2 +\nabla^2 W(\tau^*)$,
considered as an operator on all of $L^2[0,1]_{\mathrm per}$ (i.e.,
with no zero-mass restriction). {\it This result appears to be
completely new.}

Moreover, though we do not state it in detail here, we obtain a
corresponding characterization of stability also for multiply-periodic solutions of the full $d$-dimensional equations of
viscoelasticity with strain-gradient effects, $d=1,2,3$, for which $\tau\in
\RM^{d\times d}$ and $u\in \RM^d$, yielding $d^2+d$ constraints
$C_j$ on constrained minimization problem \eqref{min}, again
concerning the mechanical energy \be\label{menergy}
\mathcal{E}(\tau,u):= \int_0^1 \Big( \frac12|u|^2 + W(\tau) +
\Psi(\tau_x)\Big)dx. \ee We obtain as a necessary and sufficient
condition for co-periodic stability that the number of positive
eigenvalues of $(\partial c_\tau/\partial \omega_\tau)(\omega_\tau^*)$, a symmetric
$(d^2+d)\times (d^2+d)$-dimensional tensor,  is equal to the number
of negative eigenvalues of $-\nabla^2\Psi( \tau^*_x)\partial_x^2
+\nabla^2 W(\tau^*)$. For discussion of the full equations and their
derivation, see for example \cite{BLeZ,D}. {\it This result, again,
appears to be completely new}; indeed, the approach by other (e.g.,
Evans function) means appears quite forbidding in likely complexity.
Again, we collect the results from this subsection in the following proposition.
\begin{proposition}\label{V-E-stability}
Assume that $(\tau^\ast,u^\ast)$ is a \textbf{1D-periodic or multiD--periodic} pattern of \eqref{e:visco} The following assertions hold true:
\begin{itemize}
\item[(i)] For the case of one-dimensional deformations $\tau\in\RR$, the periodic pattern $(\tau^\ast,u^\ast)$ is stable if and only if
$(dc_\tau/d\omega_\tau)(\omega_\tau^\ast)>0$ and $dL/d\tau_\mathrm{max}>0$ ;
\item[(ii)] For the case of multidimensional deformations $\tau \in \RM^d$, the periodic pattern
$(\tau^\ast,u^\ast)$ is stable if and only if the number of positive eigenvalues of $(\partial c_\tau/\partial
\omega_\tau)(\omega^\ast)$ is equal to the number of negative eigenvalues of $
-\nabla^2\Psi( \tau^*_x)\partial_x^2 +\nabla^2 W(\tau^*)$,
considered as an operator on all of $L^2[0,1]_{\mathrm per}$;
\item[(iii)] For the case of full $d$-dimensional equations of viscoelasticity with strain-gradient effects, the multiply-periodic pattern $(u^\ast,v^\ast)$ is stable if and only if the number of positive
eigenvalues of $(\partial c_\tau/\partial \omega_\tau)$, a symmetric
$(d^2+d)\times (d^2+d)$-dimensional tensor,  is equal to the number
of negative eigenvalues of $-\nabla^2\Psi( \tau^*_x)\partial_x^2
+\nabla^2 W(\tau^*)$ considered as an operator on all of $L^2[0,1]_{\mathrm per}$.
\end{itemize}
\end{proposition}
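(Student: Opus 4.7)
The plan is to apply Theorem~\ref{specthm} to reduce spectral stability to the constrained variational criterion \eqref{sorbvar}, and then exploit the block structure of $A_{\omega^*}$ and of $(\partial c/\partial\omega)(\omega^*)$ to replace the signatures of the full operators with their restrictions to the $\tau$-block alone.

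First I would verify the standing hypotheses. Assumption \eqref{h1} is immediate because $C$ is linear in $(\tau,u)$, and \eqref{h2'} follows from the smooth $\omega$-dependent family of solutions of the nonlinear oscillator \eqref{int-ve}, parametrized by initial data on the level set of the first integral. I would take the graded spaces to be $\HM_2 = L^2\times L^2$, $\HM = H^1\times L^2$, and $\HM_1 = H^2\times H^1$ (with obvious modifications in (iii)); this yields \eqref{b1}, and \eqref{b2'} follows from standard local well-posedness for the fourth-order semilinear parabolic system \eqref{e:visco} together with the nonlinear damping estimates alluded to in Remark~\ref{b2'rmk}. Assumption \eqref{h4} follows from a G\aa rding inequality on the $\tau$-component using $\nabla^2\Psi\ge h_0 I>0$, combined with the trivial $L^2$ identity on the $u$-component and the spectral gap of $A_{\omega^*}$ at the critical point. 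Assumption \eqref{h3} is built into the non-degeneracy implicit in the statement. Crucially, \eqref{e1} is already verified in Lemma~\ref{Ever}. Theorem~\ref{specthm} thus reduces each of (i)--(iii) to verifying the signature identity \eqref{sorbvar}.

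Next I would use the block structure \eqref{A-omega-ve}: $A_{\omega^*} = \mathrm{diag}(A_\tau, \mathrm{I})$, where $A_\tau := -\nabla^2\Psi(\tau^*_x)\partial_x^2 + \nabla^2 W(\tau^*)$. Since the $u$-component of any critical point satisfies $u^* = -\omega_u$, the Jacobian $(\partial c/\partial\omega)(\omega^*)$ is itself block diagonal with $(\partial c_u/\partial\omega_u) = -I_d$ and vanishing off-diagonal blocks. By Proposition~\ref{p2.5}(i) we obtain $\sigma_-(A_{\omega^*}) = \sigma_-(A_\tau)$ and $\sigma_-(-\partial c/\partial\omega) = \sigma_-(-\partial c_\tau/\partial\omega_\tau)$, so \eqref{sorbvar} becomes precisely the equality of the number of negative eigenvalues of $A_\tau$ (on $L^2[0,1]_{\mathrm{per}}$) and the number of positive eigenvalues of $(\partial c_\tau/\partial\omega_\tau)(\omega_\tau^*)$. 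This already delivers (ii) at once; (iii) follows by identical bookkeeping once $\tau\in\RM^{d\times d}$, so that $\nabla C$ acquires $d^2+d$ components and $(\partial c_\tau/\partial\omega_\tau)$ becomes the corresponding symmetric tensor.

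For part (i), with $\tau,u\in\RM$, I would further compute $\sigma_-(A_\tau)$ by Sturm-Liouville theory, parallel to the scalar Cahn-Hilliard analysis in Section~\ref{s:CH}. Decomposing the co-periodic problem on $[0,1]$ into even and odd subspaces yields Neumann and Dirichlet half-period problems on $[0,1/2]$; since $\tau^*_x$ has definite sign on $[0,1/2]$, it is the ground state of the Dirichlet problem and $A_\tau$ is non-negative there. On the Neumann side, the classical period-amplitude dichotomy (see \cite{Korman,Schaaf} and the discussion preceding Proposition~\ref{stab-CH}) gives Morse index $1$ when $dL/d\tau_{\max}>0$ and Morse index $2$ otherwise, so $\sigma_-(A_\tau) = 1$ precisely when $dL/d\tau_{\max}>0$. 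In the scalar case $(\partial c_\tau/\partial\omega_\tau)$ is a real number, hence $\sigma_-(-\partial c_\tau/\partial\omega_\tau) = 1$ iff $(dc_\tau/d\omega_\tau)(\omega_\tau^*) > 0$; matching these via \eqref{sorbvar} yields exactly the conjunction asserted in (i).

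The main obstacle I anticipate is the simultaneous verification of \eqref{b2'} and the coercivity \eqref{h4} in a single Hilbert space for case (iii), where $\tau$ is matrix-valued on a multidimensional torus and $\nabla^2\Psi(\tau^*_x)$ couples components in a nontrivial way; this requires the full apparatus of multidimensional strain-gradient viscoelasticity well-posedness, as developed in \cite{BYZ}. A secondary subtlety in part (i) is that the classical period-amplitude criterion is stated for Hill-type operators $-\partial_x^2 + V(x)$, whereas $A_\tau$ carries a non-constant principal coefficient $\Psi''(\tau^*_x)$; a change of independent variable, or equivalently an energy-phase plane argument applied directly to the first integral of \eqref{int-ve}, reduces to the classical case.
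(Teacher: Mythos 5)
Your proposal is correct and follows essentially the same route as the paper: the paper's own justification is precisely the discussion preceding the proposition, which fits \eqref{e:visco} into the quasi-gradient framework, verifies \eqref{e1} via Lemma~\ref{Ever}, invokes the general theory (Theorem~\ref{specthm}) to reduce to \eqref{sorbvar}, uses the block-diagonal structure of $A_{\omega^*}$ and $(\partial c/\partial\omega)$ with $(\partial c_u/\partial\omega_u)=-I_d$ to pass to the $\tau$-block, and settles the scalar case by the Sturm--Liouville/period-amplitude argument imported from the Cahn--Hilliard section. Your explicit verification of \eqref{h1}--\eqref{h4}, \eqref{b1}--\eqref{b2'} and your remark on the non-constant principal coefficient are if anything more careful than the paper, which leaves these points implicit.
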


\br
It has been shown by other means in \cite{BYZ,Z3} that variational
and time-evolutionary stability are equivalent for \eqref{e:visco}; here,
we recover these results as a consequence of our general theory.
We note that it was shown in \cite{BYZ,Z3} also that
the spectrum of $MA_{\omega^*}$ is real for $\mathrm{Re}\lambda \ge 0$,
which is special to the precise structure of the model, and
does not hold in general.
\er

\br\label{visceval}
As in Remark \ref{eval}, we find,
integrating the traveling-wave ODE \eqref{int-ve}
over one period, that
$\omega_\tau^*= -\nabla W(\tau^*)^a$,
where superscript $a$ denotes average over one period,
giving the explicit (singly periodic) stability condition
\be\label{vexplicitcondition}
\hbox{\rm
$d(W'(\tau^* )^a)/dc_\tau <0$
and $dL/d\tau_\mathrm{max}>0$
(case $d=1$)}
\ee
or
$\sigma_- \big( \partial(\nabla W(\tau^*)^a)/\partial c_\tau\big)=\sigma_- \big( A_{\omega^*}\big) $ (case $d\ge 1$), where
$c_\tau=\int_0^1\tau^*(x)dx$.
\er

\br\label{contrmk}
On their natural spaces
$\HM=(H^1\times L^2)[0,1]_{\rm per}$ and $\HM=(L^2\times H^1)[0,1]_{\rm per}$,
respectively (the spaces on which the second variation is bounded and coercive),
\eqref{viscenergy} and \eqref{Cons-Diff-E} are not only not $C^2$,
but not even continuous, since $\tau_x$ and $u$
enter nonlinearly, but are not controlled in $L^\infty$ by the norm
associated with $\HM$.
\er

\subsection{Generalized KdV}\label{s:gKdV}
It is well-known from the celebrated work of Grillakis--Shatah--Strauss (\cite{GSS}),
that the stability of solitary waves arising in models
such as the Klein-Gordon equation or the
generalized KdV equation
can be studied using
variational results similar to our results. We briefly mention
here the generalized KdV model and the corresponding functionals.
Consider the equation
\begin{equation}\label{genKdV}
u_t+(\nabla F(u))_x=u_{xxx}.
\end{equation}
Here $u\in\RM^m$ and $F:\RM^m\to\RM$ is a smooth function. This
equation can have periodic waves with respect to co-periodic
perturbations as shown in \cite{BJK} or solitary waves on the whole line
as discussed in \cite{GSS}. This equation also fits the general
framework \eqref{e:genham} with
\begin{equation}\label{energy-KdV}
\mathcal{E}(u):=\int_0^1\Bigl(\frac12 |u_x|^2 + F(u)\Bigr)dx
\end{equation}
and
\begin{equation}\label{KdV2}
J=\pa_x,\qquad \nabla_{L^2} \mathcal{E}(u)= \nabla F(u)-\partial_x^2
u.
\end{equation}
In the case of periodic patterns we can take $x\in [0,1]_{\rm per}$
and consider \eqref{genKdV} in the energy space
$\HM=H^1_\mathrm{per}[0,2]$, with mass constraint
$C(u)=(\int_0^1 u(x)dx,\int_0^1 u^2(x)dx).$

\br\label{diffrmk}
It is readily verified that energies \eqref{CHE},\eqref{energy-KdV}
are $C^2$ on $\HM=H^1[0,1]_{\rm per}$ or $H^1(\RM)$.
Moreover, there is a weak existence theory \cite{Me} on the
energy space $\HM$,
verifying condition \eqref{b2}.
\er

\subsection{General conservation laws}\label{ozrmk}
For periodic solutions of general parabolic conservation laws
$$ u_t +(f(u))_x=(B(u)u_x)_x, $$
{\it without the assumption of generalized
gradient form}, Oh and Zumbrun (\cite{OZ1}, Thm. 5.9) have obtained
by direct Evans function/Stability index techniques the related {\it
necessary stability condition } that $\det\Bigl((\partial
c/\partial\omega)( \omega^*)\Bigr)$ have a certain sign $\sgn
\gamma$ determined by the orientation properties of $u^*$ as a
solution of the underlying traveling-wave ODE. The Jacobian
$(\partial c/\partial\omega)( \omega^*)$ is connected to the
(necessary) stability index condition of Theorem 5.9, \cite{OZ1}
through the observation that, in the notation of the reference,
$$
\det (\partial u_m^*/\partial u_-)\det df(u_-)=
\det(\partial u_m^*/\partial u_-)(\partial u_-/\partial q)=
\det (\partial u_m^*/\partial q)=\det\Bigl((\partial c/\partial\omega)( \omega^*)\Bigr),
$$
where $q=\omega^*$ and $u_m^*=c$.
This is somewhat analogous
to the relation established in \cite{PW} between Evans and variational
stability conditions in the Hamiltonian solitary wave case; see
also \cite{Z2}.
The sign $\sgn \gamma$ may be recognized, by a homotopy argument
similar to that used to prove the full index relation,
as measuring the parity of the sign of the Morse index of
the linearized traveling-wave
operator $\Bigl(B(u^*(x)\partial_x+dB(u^*(x))u_x^*\Bigr)- df(u^*(x))$
equivalent (by integration) to the restriction to
the set of zero-mass functions $\nabla C(u^*)^\perp$
of the full linearized operator
$$
\partial_x\Bigl(B(u^*(x))\partial_x+dB(u^*(x))u_x^* - df(u^*(x)) \Bigr),
$$
where $C(u):=\int_0^1u(x)dx,$ thus completing the analogy to the generalized
gradient case.
Indeed, as this argument reiterates, the stability index itself may be
viewed as a mod two
version of the classical Sturm--Liouville theory applicable to the
non-selfadjoint case; see especially the discussion of \cite{E,U}
from a similar perspective of the general self-adjoint case, for which
the zeros counted in the second-order scalar case are replaced by zeros of
an appropriate Wronskian.

\section{Sideband instability and modulational dichotomies}\label{s:sideband}
We now describe some
interesting consequences regarding the Whitham modulation
equations and modulational stability of periodic waves,
in particular, the modulational dichotomies mentioned in the introduction.
The Whitham modulation equations, a formal WKB expansion in the
high-frequency limit, is known (see \cite{Se,OZ2,JZB,BJNRZ,FST} and especially
\cite{NR1,NR2}) to have a dispersion relation agreeing to lowest order
with the spectral expansion of the critical eigenmodes of $L_{\omega^*}$
determining low-frequency (sideband) stability.
In particular, {\it well-posedness of the Whitham equation} may be seen
to be {\it necessary for low-frequency modulational stability.}

As critical, or slow, modes are associated to linear order with
variations $\partial X/\partial \omega$ and $\partial X/\partial s$
along the manifold of nearby stationary solutions, the equations naturally
involve the same quantities already studied in relation to co-periodicity,
and this turns out to yield in some situations
substantial
further information.
For details of the derivation of the Whitham equations
in the generality considered here, see, for example, \cite{Se,OZ2}.

\subsection{Viscoelasticity with strain-gradient effects}
The first-order Witham system for \eqref{e:visco} is
(see \cite{JNRZ2}):
\ba\label{whitham}
\tau^a_t - u^a_x&=0,\\
u^a_t - \nabla W(\tau)^a_x&=0,\\
k_t&=0,
\ea
where superscript $a$ denotes average over one period,
$k$ denotes wave number, or one over period.
By the block-triangular structure, we may ignore the trivial
third equation and concentrate on the first two.
Indeed, this is a common feature of all of our dissipative examples,
as the general $k$-equation is $k_t-(\sigma(\omega, k)k)_x=0$,
where $\sigma$ denotes speed of the associated periodic traveling wave,
which for gradient-type systems is identically zero.


Integrating the traveling-wave ODE \eqref{critlag}
$$
-(\nabla\Psi(\tau_x))_x + \nabla W(\tau) + \omega=0,
$$
over one period, where $\omega$ is the Lagrange
multiplier for the associated constrained minimization problem
in $\tau$, we find that $\nabla W(\tau)^a=-\omega$,
hence \eqref{whitham} writes more simply as
\ba\label{whitham2}
c_t - d_x&=0,\\
d_t + \omega_x&=0,
\ea
where $c=\tau^a $ is the constraint for the minimization problem,
$d=u^*$ is a parameter for the minimization problem (irrelevant,
by Galillean invariance with respect to $u$),
and $\omega=\omega(c)$ independent of $u$ is determined by the solution
structure of the standing-wave problem.
This is of exactly the same form as the equations of viscoelasticity
themselves, whence the condition for hyperbolicity of the equations
is that $\partial \omega/\partial c$ (recall, this is a Hessian for the constrained
minimization problem, so symmetric) be {\it negative definite}.

It has been shown that hyperbolicity of the Whitham system is
necessary for sideband stability.
This means that periodic waves are sideband
stable {\it only if}
$\partial \omega/\partial c<0$, in which case
Corollary \ref{stabcor}, implies that they are co-periodic stable
only if the second variation operator
$$
-\nabla^2\Psi( \tau^*_x)\partial_x^2 +\nabla^2 W(\tau^*)
$$
is nonnegative on all of $H^1[0,1]_{\rm per}$,
not just for perturbations with zero mean.
This is false in the scalar case $\tau\in \RM$, by Sturm-Liouville
theory and the fact that $\tau_x^\ast$ is a zero-eigenfunction that
changes sign. Thus, we recover in straightforward fashion the
dichotomy result of \cite{OZ1}.
\begin{theorem}\label{V-E-dich} In the scalar case, $\tau,u\in\R$, assume that
 $(\tau^\ast,u^\ast)$ is a periodic solution of \eqref{e:visco}. Then $(\tau^\ast,u^\ast)$ is either co-periodic unstable
or else it is sideband unstable: in all cases, modulationally unstable.
For systems $\tau,u\in\R^d$,
solutions are modulationally unstable
unless $A_{\omega^*}=-\nabla^2\Psi( \tau^*_x)\partial_x^2 +\nabla^2 W(\tau^*)$
is nonnegative on all of $H^1[0,1]_{\mathrm per}^d$.
\end{theorem}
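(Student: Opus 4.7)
The plan is to combine the signature identity of Theorem~\ref{main} with the standard necessity of Whitham hyperbolicity for sideband stability, and then invoke periodic Sturm--Liouville theory in the scalar case. Throughout I fix a periodic critical point $(\tau^*, u^*)$ of \eqref{e:visco}, with Lagrange multiplier $\omega^* = (\omega_\tau^*, \omega_u^*)$ and second-variation operator $A_{\omega^*}$ as in \eqref{A-omega-ve}.

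First I would read off hyperbolicity from the reduced Whitham system \eqref{whitham2}. Its principal symbol has the block form $\bp 0 & I_d \\ -(\partial\omega_\tau/\partial c_\tau)(\omega_\tau^*) & 0 \ep$, whose squared eigenvalues are the eigenvalues of $-\partial\omega_\tau/\partial c_\tau$; hyperbolicity is therefore equivalent to $\partial\omega_\tau/\partial c_\tau$ being negative semidefinite and, by \eqref{h3}, to $-\partial c_\tau/\partial\omega_\tau$ being positive definite, i.e., $\sigma_-(-\partial c_\tau/\partial\omega_\tau)(\omega_\tau^*) = 0$. Using the block-diagonal structure of $(\partial c/\partial\omega)(\omega^*)$ noted in Section~\ref{s:visco}, together with $(\partial c_u/\partial\omega_u) = -I_d$, the $u$-sector contributes a fixed count to both $-\partial c/\partial\omega$ and $A_{\omega^*}$ (via the trivial $+I_d$ block in \eqref{A-omega-ve}), and the nontrivial content lies entirely in the $\tau$-sector.

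Next I would feed this into Theorem~\ref{main}:
\[
\sigma_-(A_{\omega^*}) = \sigma_-\bigl(A_{\omega^*}|_{\nabla C(X^*)^\perp}\bigr) + \sigma_-\bigl(-(\partial c/\partial\omega)(\omega^*)\bigr).
\]
If the wave is both co-periodic and sideband stable, Corollary~\ref{stabcor} (available via the quasi-gradient framework of Section~\ref{s:gengrad} and Lemma~\ref{Ever}, or equivalently Theorem~\ref{specthm}) forces $\sigma_-(A_{\omega^*}) = \sigma_-(-\partial c/\partial\omega)(\omega^*)$. Hence $\sigma_-(A_{\omega^*}|_{\nabla C(X^*)^\perp}) = 0$, and cancelling the matched $u$-block contributions from the two sides of the identity leaves $\sigma_- = 0$ for the $\tau$-block $-\nabla^2\Psi(\tau_x^*)\partial_x^2 + \nabla^2 W(\tau^*)$ on $H^1[0,1]_{\mathrm{per}}^d$. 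Since the $u$-block of $A_{\omega^*}$ is manifestly positive, this gives the systems assertion.

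For the scalar case, translation invariance supplies the zero eigenfunction $\tau_x^*$ of the scalar Hill operator $-\Psi''(\tau_x^*)\partial_x^2 + W''(\tau^*)$; since $\tau^*$ is non-constant and periodic, $\tau_x^*$ changes sign on $[0,1]$, so by periodic Sturm--Liouville theory it cannot be the periodic ground state, forcing $\sigma_- \geq 1$. This contradicts the nonnegativity just derived, and hence modulational instability. The main obstacle I anticipate is the $u$-sector bookkeeping highlighted above: one must verify that the $-I_d$ block from $(\partial c_u/\partial\omega_u)$ really does cancel identically against the $+I_d$ block of $A_{\omega^*}$ through the signature identity, so that the conclusion pins down nonnegativity of the $\tau$-block alone. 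Related to this is the (minor) distinction between strict and nonstrict hyperbolicity/definiteness, which accounts for the theorem's ``unless $A_{\omega^*}$ is nonnegative'' phrasing rather than ``unless $A_{\omega^*}$ is positive definite.''
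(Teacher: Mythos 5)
Your proposal is correct and follows essentially the same route as the paper: necessity of Whitham hyperbolicity for sideband stability yields $\sigma_-\bigl(-(\partial c/\partial\omega)(\omega^*)\bigr)=0$, which combined with the signature identity of Theorem~\ref{main} and co-periodic stability forces unconstrained nonnegativity of $A_{\omega^*}$, contradicted in the scalar case by Sturm--Liouville applied to the sign-changing translational eigenfunction $\tau^*_x$. Your explicit treatment of the principal symbol and the $u$-sector bookkeeping (the cancellation of the $+I_d$ block of $A_{\omega^*}$ against $-\partial c_u/\partial\omega_u=-I_d$) simply spells out details the paper leaves implicit.
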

{\it This yields for the first time an extension to multidimensional deformations
of the dichotomy of \cite{OZ1}.}
Moreover, it gives new information even in the one-dimensional
deformation case,
removing a superfluous technical assumption (see Theorem
7.1, \cite{OZ1}) that period $T$ be increasing with
respect to amplitude $a$.
This both simplifies the analysis, eliminating the often-difficult
problem of determining $\sgn(dT/da)$,
and answers the question left open in \cite{OZ1}
whether there might exist modulationally stable periodic waves
with period decreasing with respect to amplitude.

Further, though, as discussed in Section \ref{s:SL},
 we cannot say much in the general system case
regarding nonnegativity of $A_{\omega^*}$, due to additional
structure we can in the present situation say substantially more.
Specifically, consider the class of multidimensional deformation
solutions obtained by continuation from the special class of {\it
decoupled solutions}, equal to a one-dimensional deformation
solution in one of its components, say, $(\tau_{j_*},u_{j_*})$, and
constant in the other components $(\tau_j, u_j)$, $j\ne j_*$.
This
generates a large subclass of the possible multidimensional
deformation solutions: in some cases, perhaps, all.

\bc\label{contcor}
In the (singly periodic) system case,
all periodic solutions of \eqref{e:visco}
lying on a branch of transverse
solutions extending from a decoupled solution are
modulationally unstable.
\ec

\begin{proof} By Sturm-Liouville considerations, decoupled solutions
(reducing to the scalar case), satisfy $\sigma_-(A_{\omega^*})\geq 1$,
hence are modulationally unstable. Noting that
$\kernel(A_{\omega^*})$ has constant dimension so long as the
solution remains transversal, we find that no eigenvalues can pass
through the origin and so $\sigma_-(A_{\omega^*})\equiv 1$ along the
entire branch, yielding the result by Theorem \ref{V-E-dich}.
\end{proof}

\br\label{numrmk}
Though transversality is difficult to check analytically, it is
straightforward to implement as part of a numerical continuation study.
From a practical point of view, the main importance of Corollary
\ref{contcor} is that waves obtained through numerical continuation
from decoupled solutions are unstable at least up to the point that
bifurcation first occurs.
\er

\br\label{branchrmk}
It is worth noting that the solutions bifurcating from the
one-dimensional case include truly multi-dimensional ones and
not only one-dimensional solutions in disguise.
As noted implicitly in the description \eqref{whitham},
transverse planar periodic solutions of \eqref{e:visco}
may be locally parametrized, up to $x$-translates,
by the wave number $k=1/L$, where $L$ is the period,
the mean $c\in \RM^d$ of $\tau$ over one period,
and the material velocity $u\equiv \const$-
the latter entering trivially,
by invariance of \eqref{e:visco} under the Galillean transformation
$u\to u+c$, $c$ constant.
Among these solutions are ``disguised'' one-dimensional solutions,
for which $\tau$ is confined to an affine set; see \cite{Y}
for further classification/discussion.
However, a convenient asymmetry in the coefficient
$1/\tau_3$ of the parabolic term in $u$ for the standard
choice of viscosity \eqref{bpsi} forces such solutions to
lie either in the hyperplane $\tau_3\equiv 1$ or else along
the line $\tau_1=\cdots=\tau_{d-1}=0$,
so that solutions can be effectively one-dimensional in this
sense only if $\tau_3\equiv 1$,
or else $\tau_1\equiv \tau_2\equiv 0$.  But,
these possibilities can be excluded by choosing a $\tau$-mean $c$
for which $c_3\ne 1$ and
$(c_1,\dots, c_{d-1})\ne (0,\dots,0)$, of which there
are uncountably many.
\er

\br\label{betteroz}
Theorem \ref{V-E-dich} resolves a question left open in
\cite{OZ1} whether there might exist modulationally stable waves
in the case $dL/d\tau_\mathrm{max}< 0$.
The reason for the difference in these two sets of results
is that the treatment of co-periodic stability in \cite{OZ1} proceeds by
a stability index (Evans function)
computation, which gives information on the parity
of the number of unstable eigenvalues of the linearized operator about
the wave and not the actual number as determined here.
\er

\subsubsection{The multiply periodic case}\label{s:multivisc}
It is interesting to consider the implications of sideband
stability for multiply periodic solutions of the full $d$-dimensional
equations of viscoelasticity with strain-gradient effects, for which the
first-order Whitham
homogenized system is (ignoring the decoupled $k$-equation)
\ba\label{mpwhitham2}
c_t - \nabla_x  d &=0,\\
d_t + \nabla_x  \omega&=0,
\ea
$x\in \RM^d$, with $c$ now a matrix $\in \RM^{d\times d}$ and $d\in \RM^d$,
or $c_{tt}-\nabla_x(\nabla_x \omega(c))=0$.
For this system,
well-posedness (hyperbolicity) is well-known \cite{D} to be equivalent to
{\it rank-one convexity} of $(\partial c/\partial \omega)$,
a less-restrictive condition than the convexity condition
arising in the one-dimensional case $x\in \RM^1$.
(Recall, under this notation, $c=\tau^a\in \RM^{d\times d}$.)
This enforces only $d$ positive eigenvalues in the signature
of $(\partial c/\partial \omega)$, with another $d(d-1)$
zero eigenvalues forced by rotational symmetry,
leaving $d^2-d$ possible negative eigenvalues, in principle compatible
with variational stability.
At the same time, the relation between variational and time-periodic
stability is unclear, as the argument of Lemma \ref{e1ver}
yielding \eqref{e1} breaks down
in the fully multidimensional case.\footnote{Specifically,
$(M+M^*)$ is no longer definite in the $u$ coordinate,
a key element in the proof.}
Thus, it seems possible that one might find {\it stable} multiply periodic
solutions of the equations of viscoelasticity.

\subsection{Cahn--Hilliard systems}
The second-order Whitham modulated system for \eqref{CH} may be seen by a
similar (straightforward) derivation as in \cite{Se} to be
\ba\label{whitham_ch}
u^a_t &= -\partial_x^2( \partial_x^2(u) -\nabla W(u))^a,\\
k_t&=\partial_x(\alpha(u^a,k)\partial_x(u^a)+
\beta(u^a,k)\partial_x k).
\ea
where
$\alpha$, $\beta$, since irrelevant for our considerations here,
are left unspecified.
Integrating the traveling-wave ODE \eqref{critlag}
$$
-u'' + \nabla W(u) + \omega=0,
$$
over one period, where $\omega$ is the Lagrange
multiplier for the associated constrained minimization problem,
we find that $(\nabla W(u)-u'')^a=\nabla W(u)^a=-\omega$,
hence \eqref{whitham_ch} writes more simply as
\ba\label{whitham2ch}
c_t &= -\partial_x^2 (\omega)=
-\partial_x ((\partial \omega/\partial c)\partial_x c ),\\
k_t&=\partial_x(\alpha(c,k)\partial_x c+
\beta(c,k)\partial_x k).
\ea
where $c=u^a$ is the constraint for the minimization problem.
This system is well-posed (parabolic), corresponding to sideband stability, only if
$\partial \omega/\partial c$ is negative definite, with the
same conclusions as in the case of viscoelasticity just considered.

In particular, in the scalar case, recovering a result of
\cite{Ho1}, we may immediately conclude existence of a dichotomy as
in \cite{OZ1}, stating that waves are either sideband unstable or
co-periodic unstable; in either case, unstable.
As in the previous
subsection, we obtain also extensions to the system case.

It is worth mentioning that the one-dimensional theory just
described extends essentially unchanged to the multiply periodic
case, i.e., stationary solutions in $H^1[0,1]^m_{\mathrm per}$.
Again, there is a single constraint $c$ equal to the mean over
$[0,1]^m$, and a single Lagrange multiplier $\omega$, with
standing-wave PDE
$$
\Delta u- \nabla W(u) =-\omega.
$$
In the scalar case, the condition for co-periodic stability
is $\partial c/\partial \omega>0$, by our abstract theory.

On the other hand, the Whitham modulation equations are easily found
to be
$$
\begin{aligned}
c_t&= \Delta \omega= -\nabla_x \cdot  ( (\partial \omega/\partial c) \nabla_x c),\\
k_t&=\nabla_x \cdot (\alpha(c,k)\nabla_x c+
\beta(c,k)\nabla_x k),
\end{aligned}
$$
$k\in \RM^m$,
hence are well-posed only if $(\partial \omega/\partial c)\le 0$.
We have therefore that
multiply periodic waves are either co-periodically
unstable or else sideband unstable, a dichotomy similar to that
observed for singly periodic scalar waves,
implying that multiply-periodic stationary solutions of the
scalar Cahn--Hilliard equation are always modulationally unstable
under assumption \eqref{h3}.
{\it So far as we know, this result for multiply-periodic Cahn--Hilliard
waves is new} (though see Remark \ref{arndrmk} below);
the connection to sideband stability likewise appears to be a novel
addition to the Cahn-Hilliard literature.

In the system case, we obtain a partial analog, similarly as for
viscoelasticity, namely, co-periodically stable solutions of
\eqref{CH} are necessarily sideband unstable, unless $A_{\omega^*}=
-\Delta_x+\nabla^2 W(u^\ast)$ is nonnegative on $H^1[0,1]^m_{\mathrm
per}$. We collect the results of this subsection in the following
theorem.

\begin{theorem}\label{V-E-W-dich}
 Assume $u^\ast$ is a periodic solution of \eqref{CH}.
Then $u^\ast$ is either co-periodic unstable or sideband unstable.
In the system case, co-periodically stable waves are sideband
unstable unless $A_{\omega^*}=-\Delta_x+\nabla^2 W(u^\ast)$ is
nonnegative on $L^2[0,1]^m_{\mathrm per}$.
\end{theorem}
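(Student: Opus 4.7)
The plan is to assemble three ingredients that are already in place in the paper: (i) the explicit form of the Whitham modulation system derived just above the theorem, (ii) the standard fact from \cite{NR1,NR2,JZB,Se,OZ2} that well-posedness (here, parabolicity) of the Whitham system is a \emph{necessary} condition for low-frequency (sideband) spectral stability of the underlying periodic wave, and (iii) the co-periodic stability criterion of Corollary \ref{stabcor}.

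First I would read off, from the reduced Whitham system
\begin{equation*}
c_t = -\nabla_x\cdot\bigl((\partial\omega/\partial c)\nabla_x c\bigr), \qquad k_t = \nabla_x\cdot\bigl(\alpha(c,k)\nabla_x c+\beta(c,k)\nabla_x k\bigr),
\end{equation*}
that the effective diffusion matrix on the $c$-block is $-\partial\omega/\partial c$, and that (by the block-triangular coupling through $k$) the parabolicity of the whole system reduces to positive semidefiniteness of this block. Hence sideband stability forces $-\partial\omega/\partial c\ge 0$, equivalently (using \eqref{h3} and the symmetry of $\partial c/\partial\omega$ established via \eqref{hessrel}) $\sigma_-(-\partial c/\partial\omega)(\omega^*)=0$.

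Next I would combine this with Corollary \ref{stabcor}, which says co-periodic stability requires $\sigma_-(A_{\omega^*})=\sigma_-(-\partial c/\partial\omega)(\omega^*)$. If the wave were simultaneously co-periodic and sideband stable then these two identities would force $\sigma_-(A_{\omega^*})=0$, i.e., $A_{\omega^*}=-\Delta_x+\nabla^2W(u^*)$ would be nonnegative on all of $L^2[0,1]^m_{\mathrm{per}}$ (not merely on the zero-mean subspace $\nabla C(u^*)^\perp$). This already establishes the second sentence of the theorem, covering the system case.

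Finally, for the scalar case ($m=1$, singly or multiply periodic), I would rule out this remaining possibility by an elliptic ground-state argument. Each partial derivative $\partial_{x_i}u^*$ lies in $\ker A_{\omega^*}$ by translational invariance, has zero mean over the torus, and, since $u^*$ is non-constant and continuous on a connected periodic domain, at least one $\partial_{x_i}u^*$ must change sign. The self-adjoint Schr\"odinger operator $-\Delta_x+\nabla^2 W(u^*)$ on the torus has a simple principal eigenvalue whose eigenfunction has a definite sign (Krein--Rutman / strong maximum principle); consequently a sign-changing null vector cannot belong to the ground eigenspace, forcing a strictly negative eigenvalue and contradicting $\sigma_-(A_{\omega^*})=0$. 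This yields the first sentence: a scalar periodic (or multiply periodic) solution of \eqref{CH} is either co-periodic unstable or sideband unstable, hence in all cases modulationally unstable.

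The main obstacle I anticipate is purely bibliographic rather than technical: confirming that the Whitham-to-sideband necessity result cited from \cite{NR1,NR2,JZB} applies verbatim to the conservative (zero-speed) Cahn--Hilliard case with the block-triangular $k$-equation, so that parabolicity of the $c$-block alone truly suffices to capture the obstruction; the rest of the argument is a direct application of the abstract framework already developed in Sections \ref{s:prem-res}--\ref{s:evolution}.
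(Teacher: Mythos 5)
Your proposal is correct and follows essentially the same route as the paper: parabolicity of the reduced Whitham system as a necessary condition for sideband stability forces $\sigma_-\bigl(-(\partial c/\partial\omega)(\omega^*)\bigr)=0$, which combined with Corollary \ref{stabcor} shows that simultaneous co-periodic and sideband stability would require $A_{\omega^*}\ge 0$ on all of $L^2[0,1]^m_{\mathrm per}$, and in the scalar case this is ruled out by the sign-changing translational null eigenfunction versus the positivity of the ground state. Your Krein--Rutman argument for the multiply periodic scalar case is a slightly more explicit rendering of what the paper handles via the sign incompatibility of $dc/d\omega$ (and, alternatively, the nodal-domain discussion of Remark \ref{arndrmk}), but the substance is the same.
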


\br\label{arndrmk}
In the scalar, one-dimensional case, we may obtain the result of modulational
({\it but not necessarily sideband}) instability more directly, as in \cite{GN},
by the observation that, by Sturm--Liouville considerations, periodic waves
cannot be co-periodically stable on a doubled domain $[0,2T]$, since the
unconstrained problem has at least two unstable modes, only one of which may
be stabilized by the constraint of constant mass.
Recalling that modulational stability is equivalent to co-periodic
stability on multiple periods $(0,NT)$ for arbitrary $N$, we obtain the result.
In the two-dimensional case, we may obtain, similarly,
co-periodic instability
on the multiple domain $(0,NT]\times (0,NT]$, by the Nodal Domain
Theorem of Courant \cite{CH}, using a bit of planar topological reasoning
to conclude that there must either be a nodal domain in each cell that
is disconnected from the boundary, or else a nodal curve which disconnects
two opposing edges of the cell boundary, and thus there must be at
least $2N$ nodal domains in $(0,NT]\times (0,NT]$, and thereby $2N-1$ unstable
modes,
again yielding modulational instability for $N\geq 2$.
This argument is suggestive also in dimensions $m=3$ and higher, but
would require further work to eliminate the possibility that there
are only two nodal domains in $(0,NT]^m$, for example a thickened
$m$-dimensional lattice (no longer disconnecting space in dimensions
$m>3$) and its complement. \er

\br
It would be interesting to consider the consequences of our
theory in the planar multidimensional case considered by Howard in \cite{Ho2},
that is, the implications for transverse instability.
\er

\subsection{Coupled conservative-reaction systems}

For equations \eqref{Cons-Diff}, it is readily seen that the
second-order
Whitham modulated system is
\ba\label{whitham_KS}
u^a_t &= \partial_x\Bigl(b(u)^a\partial_x (F'(u)-v)\Bigr),\\
k_t&=\partial_x\Bigl(\alpha(u^a,k)\partial_x(u^a)+ \beta(u^a,k)\partial_x k\Bigr),
\ea
so that, denoting $c=u^a$, $b(u)^a=\beta(c,k)>0$,
and
referring to the traveling-wave equation \eqref{2.4-KS},
we obtain the equation
$$
\begin{aligned}
c_t&= -(\beta(c,k) \omega_x )_x=
-(\beta(c,k)(\partial \omega/\partial c)c_x)_x,\\
k_t&=(\alpha(c,k) c_x+ \beta(c,k) k_x)_x,
\end{aligned}
$$
which is well-posed (parabolic)
only if $(\partial c/\partial \omega)(\omega_*)<0$,
in contradiction with co-periodic stability.
Thus, we find a dichotomy as in the previous cases.

Likewise, for multiply periodic solutions of
\begin{equation}\label{mCons-Diff}
\begin{array}{ll} u_t=\nabla_x\cdot\Big(a(u)\nabla_x u
-b(u)\nabla_x v\Big),\\
v_t=\Delta_{x} v+\delta u+g(v), \end{array}
\end{equation}
or stationary solutions in $H^1[0,1]^m_{\mathrm per}$, we obtain
$$
\begin{aligned}
c_t&= -\nabla_x \cdot (\beta(c,k)\partial (\omega/\partial c) \nabla_x c  ),\\
k_t&=\nabla_x \cdot
(\alpha(c,k)\nabla_x c+ \beta(c,k)\nabla_x k),
\end{aligned}
$$
$k\in \RM^m$,
where $c$ is equal to the mean over $[0,1]^m$, and $\omega$ is the
corresponding Lagrange multiplier, again yielding a modulational
dichotomy unless $A_{\omega_*}$ is not nonnegative on
$H^1[0,1]^m_{\mathrm per}$, We collect the results of this
subsection in the following theorem.

\begin{theorem}\label{cRD-dich} Assume that
 $(u^\ast,v^\ast)$ is a periodic solution of \eqref{Cons-Diff}.
Then $(u^\ast,v^\ast)$ is either co-periodic unstable or else it is
sideband unstable: in all cases, modulationally unstable. Multiply
periodic solutions of \eqref{mCons-Diff}  are modulationally
unstable unless $A_{\omega^*}$ is nonnegative on $H^1[0,1]_{\mathrm
per}^m$.
\end{theorem}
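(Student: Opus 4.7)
The plan is to assemble the modulational dichotomy for \eqref{Cons-Diff} and \eqref{mCons-Diff} by combining three ingredients already in place: (i) the Whitham system derived in \eqref{whitham_KS}, (ii) the necessity of Whitham well--posedness for low--frequency (sideband) spectral stability, and (iii) the constrained variational characterization of co--periodic stability from Corollary~\ref{stabcor} and Proposition~\ref{KS-per-stab}. The strategy mirrors the arguments just carried out for viscoelasticity (Theorem~\ref{V-E-dich}) and Cahn--Hilliard (Theorem~\ref{V-E-W-dich}); the task is to verify that the same pattern goes through in the coupled conservative--reaction setting.

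First, I would reduce \eqref{whitham_KS} to closed form in $(c,k)$ via the traveling--wave equation \eqref{2.4-KS}. Averaging \eqref{2.4-KS} over one period gives $(F'(u)-v)^a = -\omega$, so that the principal part of the $c$-equation becomes $c_t = -\bigl(\beta(c,k)(\partial\omega/\partial c)\,c_x\bigr)_x$ with $\beta(c,k)=b(u)^a>0$. Parabolicity of this quasilinear principal symbol therefore requires $(\partial\omega/\partial c)(\omega^*)\ge 0$, equivalently $(\partial c/\partial\omega)(\omega^*)\le 0$. Invoking the standard fact from the modulation literature cited in Section~\ref{s:sideband} (that the Whitham dispersion relation agrees to leading order with the expansion of the critical Bloch eigenmodes of $L_{\omega^*}$), I would conclude that well--posedness of the Whitham system is necessary for sideband spectral stability.

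Second, I would contrast this with co--periodic stability. Proposition~\ref{KS-per-stab} shows that co--periodic stability of a periodic wave of \eqref{Cons-Diff} requires $(dc/d\omega)(\omega^*)>0$ (together with $dL/dv_{\max}>0$). Since $(\partial c/\partial\omega)(\omega^*)>0$ and $(\partial c/\partial\omega)(\omega^*)\le 0$ are mutually exclusive, no singly periodic wave can be simultaneously co--periodically and sideband stable, yielding modulational instability in all cases. This gives the first assertion of the theorem.

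Third, for multiply periodic solutions of \eqref{mCons-Diff}, I would derive the analogous Whitham system, whose $c$-equation now has principal part $c_t=-\nabla_x\cdot\bigl(\beta(c,k)(\partial\omega/\partial c)\nabla_x c\bigr)$ with $c$ the mean over $[0,1]^m$ and $\omega$ its conjugate Lagrange multiplier. Parabolicity now forces the \emph{matrix} $(\partial\omega/\partial c)(\omega^*)$ to be nonnegative. Applying the signature identity \eqref{sigrel} from Theorem~\ref{main} together with Corollary~\ref{stabcor}, co--periodic stability combined with sideband stability would then force $\sigma_-(A_{\omega^*})=0$ on all of $H^1[0,1]^m_{\mathrm{per}}$, i.e., $A_{\omega^*}$ nonnegative without the zero--mean constraint; absence of this nonnegativity forces modulational instability, as claimed.

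The main obstacle is that the passage from Whitham ill--posedness to genuine sideband spectral instability of the linearized operator $L_{\omega^*}=-M(X^*)A_{\omega^*}$ is invoked as a known fact rather than reproved here, and care must be taken that the reduction of \eqref{whitham_KS} via \eqref{2.4-KS} does not alter the principal symbol (the lower--order $\alpha$, $\beta$ coefficients in the $k$-equation are irrelevant, since ill--posedness of the upper--left block already destroys sideband stability by the block--triangular structure inherited from the trivial $k$-transport equation for gradient--type systems). Once these are granted, the argument is a direct transcription of the Cahn--Hilliard and viscoelastic dichotomies to the present setting.
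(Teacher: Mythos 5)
Your proposal follows essentially the same route as the paper: reduce the Whitham system \eqref{whitham_KS} to closed form via the averaged traveling-wave relation $(F'(u)-v)^a=-\omega$, read off that parabolic well-posedness (necessary for sideband stability) forces $(\partial c/\partial\omega)(\omega^*)\le 0$, and contrast this with the co-periodic requirement $(dc/d\omega)(\omega^*)>0$ from Proposition~\ref{KS-per-stab} (resp.\ with the signature identity \eqref{sigrel} in the multiply periodic case). One small slip: since $c_t=-\bigl(\beta(\partial\omega/\partial c)c_x\bigr)_x$ has diffusion coefficient $-\beta(\partial\omega/\partial c)$ with $\beta>0$, parabolicity requires $(\partial\omega/\partial c)\le 0$, not $\ge 0$, and this carries the \emph{same} sign as $(\partial c/\partial\omega)\le 0$ (the two Jacobians are mutually inverse, hence of equal signature), so your ``equivalently'' clause is the one that is actually correct and the remainder of the argument is unaffected.
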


\subsection{Generalized KdV}\label{s:kdvside}
Results of \cite{BJ1}
for the generalized KdV equation, relating
sideband stability to Jacobian determinants of action variables,
appear to be an instance of a similar phenomenon in the Hamiltonian
case.
This would be a an interesting direction for further study.

\subsection{Dichotomies revisited}\label{s:revisit}
The above examples can be understood in a simpler and more unified
way through the underlying quasi-gradient structure
of the linearized generator $\mathcal{L}=-MA$.
Namely, we have only to recall that {\it sideband instability}
is defined as stability for $\xi\in \R$, $|\xi|<<1$ of the small
eigenvalues of the Floquet operator
\be\label{Lxi}
\mathcal{L}_\xi:=e^{-i\xi x}\mathcal{L}e^{i\xi x}=
M_\xi A_\xi
\ee
bifurcating from zero eigenvalues of $\mathcal{L}_0=\mathcal{L}$,
where the $\xi$ subscript indicates conjugation by the multiplication
operator $e^{i\xi x}$, or
$$
\mathcal{L}_\xi:=e^{-i\xi x}\mathcal{L}e^{i\xi x},
\quad
M_\xi:=e^{-i\xi x}M e^{i\xi x},
\quad
A_\xi:=e^{-i\xi x}A e^{i\xi x}.
$$
{\it Modulational stability}, similarly,
is defined as stability for $\xi\in \R$, of $M_\xi A_\xi$.

Note first that $A_\xi$ inherits automatically the self-adjoint property
of $A$, and also the quasi-gradient property
\be\label{qprop}
\hbox{\rm $\frac12(M_\xi+M_\xi^*)\ge 0$ for all $|\xi|<<1$,
}
\ee
as seen by the computation
$(M_\xi+M^*_\xi)=
(M+M^*)_\xi=
(M+M^*)^{1/2}_\xi (M+M^*)^{1/2}_\xi$,
both of which follow from
the adjoint rule
\be\label{adjrule}
(N_\xi)^*=(N^*)_\xi,
\ee
where the $*$ on the lefthand side refers to $L^2[0,L]_{\rm per}$
and the $*$ on the righthand side refers to $L^2(\RM)$ adjoint,
valid for any operator $N$ that has an $L^2(\RM)$ adjoint $N^*$,
is $L$-periodic in the sense that
\be\label{Lper}
(Nf)(x+L)= \big(Nf(\cdot+L)\big)(x)
\ee
for $C^\infty$ test functions $f$, and is bounded from $W^{s,\infty}$
to $L^\infty$ for $s$ sufficiently large: in particular, for the
periodic-coefficient differential operators considered here in the
applications.
We note further that, under these assumptions,
$N_\xi$ and $N_\xi^*$ take $L$-periodic functions to $L$-periodic functions.

The latter statement follows by noting that $N_\xi$ is also
invariant under shifts by one period. The former follows by noting
that, for $u$, $v$ periodic, $H^s$  and satisfying \be\label{prop2}
\hbox{ $u(x+L)=\gamma u(x)$, $v(x+L)=\gamma v(x)$, $|\gamma|=1$,}
\ee we have that  $\langle u, Nv\rangle_{L^2[0,L]}$ is equal to  $L$
times the  mean of $u \cdot Nv$  over $\RM$. This follows by smooth
truncation on bigger and bigger domains, with truncation occurring
over a single period, and noting that this bounded error goes to
zero in the limit. But, then, this is also equal to the mean of $N^*
u \cdot v$ over $\RM$, by taking the $L^2$ adjoint for the truncated
approximants and arguing as before by continuity. Finally, observe
that  $\langle e^{-i\xi x}N e^{i\xi x} u, v\rangle = \langle N
\tilde u, \tilde v \rangle$, where $\tilde u=e^{i\xi x}u$ and
$\tilde v=e^{i\xi x}v$ satisfy the above property \eqref{prop2}.
Thus, it is equal to $\langle \tilde u, N^* \tilde v\rangle= \langle
u, (N^*)_\xi v\rangle$ as claimed. For periodic-coefficient
differential operators $p(\partial_x)=\sum_{j=0}^r a_j
\partial_x^j$, \eqref{adjrule}, and the property that periodic functions
are taken to periodic functions, may be verified directly, using
the rule
$$
(p(\partial x))_\xi=p(\partial_x +i\xi).
$$

Now, assume, as is easily verified in each of the cases considered (but
need not always be true), that
\be\label{f1}\tag{F1}
\ker M_\xi=\emptyset,
\ee
so that the corresponding linear evolution equation $(d/dt)Y=-MAY$ is
unconstrained (has no associated conservation laws).
Then, {\it for $M$ self-adjoint}, we obtain the correspondence
\be\label{count}
\sigma_-(A)\leq \sigma_-(A_\xi)=n_-(M_\xi A_\xi),\footnote{
The strict inequality $ \sigma_-(A)<\sigma_-(A_\xi)$ is possible in
the cases we consider, due to bifurcation of $\ker A$.}
\ee
where $n_-$ is the number of negative eigenvalues counted by algebraic
multiplicity,
as may be seen by the similarity transformation
$M_\xi A_\xi\to M_\xi^{-1/2}M_\xi A_\xi M_\xi^{1/2}= M_\xi^{1/2}A_\xi M_\xi^{1/2}$
(Here, we are assuming sufficient regularity in coefficients that eigenfunctions
remain in proper spaces upon application of $M_\xi^{\pm 1/2}$ and also implicitly
that spectrum is discrete, properties again easily verifiable in each of
the cases previously considered).
More generally, the result carries over to the case \eqref{d1'}
provided $(M_\xi+M_\xi)>0$;
see Remark \ref{grillakisrmk}.

Thus, we find immediately that modulational stability is violated
{\it unless $A_\xi$ is unconditionally stable} for $|\xi|<<1$,
which, in the limit $\xi\to 0$, gives unconditional (neutral) stability
of $A$.
Moreover, again invoking continuity of spectra with respect to $\xi$,
we find that, assuming co-periodic stability, or conditional stability
of $A$, that the negative eigenvalues of $M_{\xi}A_\xi$ must
be small eigenvalues bifurcating from zero-eigenvalues of $MA$,
which, by the property \eqref{e1} as guaranteed by Lemma \ref{e1ver},
are the only neutral (i.e., zero real part) eigenvalues of $M A$.

This argument shows that, assuming the property \eqref{count},
first, modulational stability requires unconstrained stability of
$A$, and, second, if co-periodic stability holds, then
sideband stability requires unconstrained stability of
$A$.  The second observation may be recognized as exactly
the modulational dichotomy just proved case-by-case, the first
as a generalization of the one obtained by nodal domain consideration
in Remark \ref{arndrmk}.

\br\label{otherres}
We obtain by this argument modulational dichotomies for Cahn--Hilliard
and coupled conservative--reaction diffusion equations.
On the other hand, we don't (quite) obtain a result for the viscoelastic
case, for which $M$ is not self-adjoint, and so our simple
argument for \eqref{count} does not apply.  We can recover this however
by a ``vanishing viscosity'' argument, noting for $\eps>0$ that
$n_-(M(\eps)_\xi A_\xi)=\sigma_-(A_\eps)$ for $M(\eps)=M+\eps
\Id$ and, by property \eqref{d1'}, $n_0(M(\eps)_\xi A_\xi) \equiv
n_0(M(\eps)_\xi A_\xi)=\sigma_0(A_\xi)=0$.
Mimicking the proof of Lemma \ref{Ever}, we find, similarly, that
the center subspace of $M_\xi A_\xi$ consists of the zero-subspace
of $M_\xi A_\xi$, which is likewise empty.
Thus, no zeros cross (or reach) the imaginary axis during the homotopy
and we obtain the result in the limit as $\eps\to 0$, for any
$\xi\ne 0$ sufficiently small that $A_\xi$ does not have a kernel.
\er

\br\label{recast}
The above clarifies somewhat the mechanism behind observed modulational
dichotomy results; namely, the origins of such dichotomies are a priori
knowledge of modulational instability (encoded, through considerations
as above, or as in Remark \ref{arndrmk}, in unconstrained instability
of $A_{\omega_*}$) plus the central property \eqref{e1} that the center
subspace of $MA_{\omega_*}$ consists entirely of $\ker MA_{\omega_*}$.
\er

\section{Discussion}\label{dis}

We conclude by briefly discussing how our results connect to other
stability studies.

\subsection{Relation to the Evans function}
First, we note the following connection to the Melnikov integral and the
Evans function:
\br If $r=1$, the quantity
$$
(\partial c/\partial \omega) =  \langle\nabla c,  (\partial X/\partial
\omega)\rangle= - \langle  A_\omega (\partial X/\partial \omega), (\partial
X/\partial \omega )\rangle
$$
is a Melnikov integral involving variations in $\omega$ along the
manifold of nearby stationary solutions, and as pointed out in
\cite{PW,Z2} and elsewhere should correspond to the first
nonvanishing derivative of an associated Evans function. \er

Of course, we don't need this connection, since we have an if and
only if condition for asymptotic stability under our assumption
\eqref{e1}, which is perhaps an analogy in the dissipative context
to the \cite{GSS} assumption in the Hamiltonian setting that $J$ be
one to one (the difference being that \eqref{e1} seems to be
satisfied for all the systems we know of/are interested in, while
$J$ one-to-one does not hold for (gKdV) and other primary examples).

\subsection{Localized structures: problems on the whole line}\label{s:spikes}

The results of \cite{Ho1,PS1,Z3} on exponential instability of solitary, or
``pulse-type'' exponentially localized spikes solutions suggest another dichotomy different
from the one we proved in Section~\ref{s:sideband}: if essential
spectrum of the linearization $L=-MA_{\omega^\ast}$ along the spike
is good, then point spectrum must be bad. The two dichotomies are
apparently different, but do resemble each other ---  for, co-periodic
stability is related to
point spectrum, sideband to essential spectrum
(thinking of the homoclinic limit \cite{G}).\footnote{
In view of the discussion of Section \ref{s:revisit},
the link may be just that modulation and the large-period limit are
related through small-$\xi$, or ``long-wave'' response, both
having the effect of ``removing'' constraints.
}

Moreover, we can recover
(slightly weakened versions of)
the above mentioned results on instability
of spikes from \cite{Ho1,PS1,Z3} using our results on variational
stability of periodic solutions.
Below we will illustrate the main ideas in the specific
case of the viscoelasticity model \eqref{e:visco} with $d=1$. First,
we assume that the essential spectrum of the linearization along the
spike is stable, otherwise the instability is trivial.  We note that
the quasi-gradient structure implies that it is enough to
prove variational instability  in order to obtain (neutral) time-evolutionary
instability. In \cite{Z3,Y} it was shown that for the viscoelasticity
with strain-gradient effects model \eqref{e:visco} the two types of
instability are actually equivalent.

We suspect this is true for a
larger class of models, but choose not to pursue this here.
Rather, we just notice that instability of $A_{\omega^*}$, {\it without
constraint} implies already an exponentially localized initial data
for the evolution problem on the whole line for which $\mathcal{E}$
is nondecreasing in time and initially strictly less that $\mathcal{E}(X^*)$,
whereas the manifold of nearby solitary wave solutions, consisting entirely
of translates of $X^*$, has energy $\equiv \mathcal{E}(X^*)$.
Thus, by continuity, $X(t)$ cannot converge in $H^2$ to the set of
translates of $X^*$, else the energies would also converge; this shows
that the standard notion (see \cite{Z3,Y}) of
 $L^1\cap H^s\to H^s$ orbital asymptotic stability cannot hold
for $s\ge 2$.

Next, we remark that any homoclinic spike solution can be approximated by a
sequence of periodic patterns having large period $2T$.  This is
true for any system that has a Hamiltonian structure, in particular
the models \eqref{s:CH}, \eqref{Cons-Diff} and \eqref{e:visco}.
Then, we use the convergence results for the point spectrum from
\cite{G,SS} to prove the (above type of neutral)
instability of the homoclinic spike
solution by showing that the approximating periodic pattern is
unstable for any $T> T_\ast>0$.

Averaging in \eqref{int-ve}, one readily checks that
\begin{equation}\label{conv1}
\omega_\tau=-\frac{1}{2T}\int_{-T}^{T}  W'(\tau^\ast(x))\, dx\to - W'(\tau^\infty),\quad\mbox{as}\quad T\to\infty.
\end{equation}
Similarly, using the definition of the constraint function, we have that
\begin{equation}\label{conv2}
\frac{c_\tau}{2T}=\frac{1}{2T}\int_{-T}^{T} \tau^\ast(x)\, dx\to \tau^\infty,\quad\mbox{as}\quad T\to\infty.
\end{equation}
From \eqref{conv1} and \eqref{conv2} we conclude that
\begin{equation}\label{conv3}
T\frac{\pa\omega_\tau}{\pa c_\tau}\to  -W''(\tau^\infty),\quad\mbox{as}\quad T\to\infty.\footnote{
As the reverese limit $dc/d\omega \to \infty$ indicates,
solutions with different multipliers $\omega$ are infinitely far apart
in the whole-line problem, and do not play a role; in particular,
$\ker MA=\Span \{X^*_x\}$ is one-dimensional.
}
\end{equation}
Since $\tau^\ast$ is a solution of \eqref{int-ve} having finite limit at $\pm\infty$ we infer  that $W''(\tau^\infty)>0$.
Our claim follows immediately from  Proposition~\ref{V-E-stability} (i) and \eqref{conv3},
yielding a sequence of eigenvalues $\nu_T>0$ converging as $T\to \infty$ to an
eigenvalue $\nu$ of $A_{\omega^*}$.
With further work, exploiting the results in \cite{SS}, one may show, provided the limiting operator
$A_{\omega^*}$ has a spectral gap at $\nu=0$, that the associated
eigenfunctions $f_T$ have uniform exponential decay at $\pm \infty $, hence
extract a subsequence converging to an eigenfunction $f$ of $A_{\omega^*}$; moreover,
the same uniform exponential decay allows us to conclude that
$f$ is both orthogonal to the translational eigenfunction $X^*_x$
and satisfies the linearized constraint, with eigenvalue $\nu\ge 0$
corresponding to ``neutral'' (i.e., nonstrict) orbital instability as claimed.
Provided that $\kernel(A_{\omega^*})$ is spanned by the
translational mode $X^*_x$, we obtain in fact {\it strict variational
instability} $\nu>0$.

On the other hand, we have, more simply, just by Sturm--Liouville
considerations, that $A$ is variationally unstable without constraints,
without the above construction.  Either argument leads to the conclusion
that asymptotic orbital stability cannot hold.  If one can verify further
that the derivative of the standard Evans function does not vanish at
$\lambda=0$, then one could go further (by property (E1))
to conclude strict, or exponential instability as shown in \cite{Z3}
(and analogously for reaction diffusion--conservation law spikes
in \cite{PS2}).
Such a result would then yield by \cite{G} the known result of co-periodic
instability for sufficiently large period, completing the circle of
arguments.

\br\label{frontrmk}
As noted in \cite{Z3}, stability of front-type solutions may also be
studied variationally for viscoelasticity with strain-gradient effects. It
would be interesting to try to phrase this entirely in terms of the
structure \eqref{gengrad}; in particular, we suspect that the
similar result of \cite{LW} in the context of chemotaxis is another
face of the same basic mechanism.
\er

\br\label{convert}
In the cases considered in \cite{Ho1,PS1,Z3}, the existence problem
is scalar second order with spectral gap at $\nu=0$, whence it is easily
by dimensionality seen that $X^*_x$ is the only element (up to constant
multiple) of $\kernel(A_{\omega^*})$, hence we indeed recover strict constrained
variational instability from the limiting periodic argument above.
However, we do not see a correspondingly easy way to see a priori that the
derivative of the Evans function does not vanish at the origin, so for
the moment obtain by this argument only {\it neutral instability}
for the time-evolutionary problem.
\er

\br\label{crit}
The above instability arguments (including
those of \cite{Ho1,PS1,Z3}) rely on spectral gap of
$A_{\omega^*}$ at the origin, $\nu=0$, and the associated property
of exponential convergence of $X^*$
to its endstate as $x \to \pm \infty$.
In the critical case of an algebraically-decaying homoclinic solution,
it has been shown for the Keller--Segel equation that
stability holds, with algebraic decay rate \cite{CF,BCC}.
\er

\subsection{Coupled conservative--reaction diffusion spikes
on bounded domains}\label{s:fspikes} As pointed out in \cite{PS2},
coupled conservation--reaction diffusion spikes have been
numerically observed \cite{Dr,HHMO,KS,KR}, suggesting that they may
sometimes be stable on finite domains. As we pointed out earlier,
the stability with periodic boundary conditions is equivalent to
stability with Neumann boundary conditions, since odd eigenfunctions
are automatically stable, due to the fact that the translational
derivative possesses only two sign changes in a domain of minimal
period. Assuming monotonicity of the period, one concludes that
patterns are stable whenever $dc/d\omega>0$. Now letting the period
tend to infinity, there are just a few possibilities for a periodic
solution: convergence to a homoclinic solution, convergence to a
heteroclinic solution, or unbounded amplitude. In fact, all
scenarios occur in relevant circumstances. We already discussed the
case of the spike limit, in which all periodic solutions are
unstable. In the Cahn-Hilliard example with, say, a cubic
nonlinearity $W'(u)=-u+u^3$, one finds stable solutions periodic
solutions that converge to a heteroclinic loop as the period tends
to infinity. In fact, the energy is coercive in this case, so that
there necessarily exists a constrained minimizer for any prescribed
mass. For masses $|C(u)|<1/\sqrt{3}$, homogeneous equilibria are
unstable so that coercivity of the energy enforces the existence of
stable equilibria for all periods, which, given a priori bounds from
the energy, therefore necessarily converge to a heteroclinic loop
(or a pair of layers/kinks). Similar considerations apply to the
coupled conservative reaction-diffusion context when we have a
globally coercive energy. In the case of chemotaxis, this global
coercivity fails. Also, the nonlinear oscillator that describes
periodic solutions possesses only two equilibria, which excludes
heteroclinic loops. In the large-wavelength limit, one finds
spike-like solutions which \emph{do not} converge to a homoclinic
solutions, but diverge along a family of spikes to infinity as the
period goes to infinity, consistent with our previous discussion.

\medskip
{\bf Acknowledgment.} Thanks to Jared Bronski for a helpful conversation
on origins/background literature to constrained variational problem
and related spectral theory,
and for informing us of his
unpublished work with Mathew Johnson on co-periodic stability of
nonlocal Cahn--Hilliard solutions.

\end{document}